 \newtheorem{thm}{Theorem}[section]
 \newtheorem{cor}[thm]{Corollary}
 \newtheorem{lem}[thm]{Lemma}
 \theoremstyle{definition}
 \newtheorem{defn}[thm]{Definition}
 \theoremstyle{remark}
 \newtheorem{rem}[thm]{Remark}
 \newtheorem{ex}[thm]{Example}
 \numberwithin{equation}{section}
\newcommand{\bb}[1]{\mbox{$\mathbb{#1}$}}
\newcommand{\bR}{{\mathbb R}}
\newcommand{\bC}{{\mathbb C}}
\newcommand{\bP}{{\mathbb P}}
\newcommand{\bZ}{{\mathbb Z}}
\newcommand{\bQ}{{\mathbb Q}}
\newcommand{\cO}{{\mathcal O}}
\newcommand{\cD}{{\mathcal D}}
\newcommand{\cF}{{\mathcal F}}
\newcommand{\cG}{{\mathcal G}}
\begin{document}
%-------------------------------------------------------------------------
% editorial commands: to be inserted by the editorial office
%
%\firstpage{1}
%\volume{228}
%\Copyrightyear{2004}
%\DOI{003-0001}
%
%
%\seriesextra{Just an add-on}
%\seriesextraline{This is the Concrete Title of this Book\br H.E. R and S.T.C. W, Eds.}
%
% for journals:
%
%\firstpage{1}
%\issuenumber{1}
%\Volumeandyear{1 (2004)}
%\Copyrightyear{2004}
%\DOI{003-xxxx-y}
%\Signet
%\commby{inhouse}
%\submitted{March 14, 2003}
%\received{March 16, 2000}
%\revised{June 1, 2000}
%\accepted{July 22, 2000}
%
%
%
%---------------------------------------------------------------------------
%Insert here the title, affiliations and abstract:
%
\title[Chern classes and transversality for singular spaces]
 {Chern classes and transversality for\\ singular spaces}
%----------Author 1
\author[J. Sch\"urmann ]{J\"org Sch\"urmann}

\address{%
Mathematische Institut\\
Universit\"at M\"unster\\
Einsteinstr. 62\\
48149 M\"unster\\
Germany}

\email{jschuerm@uni-muenster.de}

\begin{abstract}
 In this paper we compare different notions of transversality for possible singular complex algebraic or analytic subsets of an 
 ambient complex manifold and prove a refined intersection formula for their Chern-Schwartz-MacPherson classes. In case of
 a transversal intersection of complex Whitney stratified sets, this result is well known. For splayed subsets it was conjectured 
 (and proven in some cases) by Aluffi and Faber. Both notions are stronger than a micro-local ``non-characteristic intersection''
 condition for the characteristic cycles of (associated) constructible functions, which nevertheless is enough to imply the asked 
 refined intersection formula for the Chern-Schwartz-MacPherson classes. The proof is based the multiplicativity of
 Chern-Schwartz-MacPherson classes with respect to cross products, as well as a new Verdier-Riemann-Roch theorem
 for ``non-characteristic pullbacks''.
\end{abstract}

%\thanks{}

%----------classification, keywords, date
\subjclass{14C17, 14C40, 32S60}

\keywords{Chern classes, transversality, Euler obstruction, non-characteristic pullback, characteristic cycle, Verdier-Riemann-Roch}

%\date{January 1, 2004}
%----------additions
\dedicatory{Dedicated to Pepe Seade on his $60$th birthday}
%%% ----------------------------------------------------------------------

\maketitle

\section{Introduction}
In this paper we work in the embedded complex analytic or algebraic
context, with $X$ and $Y$  closed (maybe singular) subspaces in the ambient complex manifold $M$.
And we want to show under suitable ``transversality assumptions'' the following refined intersection formula for 
their Chern-Schwartz-MacPherson classes:
\begin{equation}\label{intro1}
   d^!\left( c_*(X) \times c_*(Y) \right) = c(TM)\cap c_*(X\cap Y) \in H_*(X\cap Y)\:.
  \end{equation}
Here $H_{*}(X)$ denotes either the Borel-Moore homology group   $H^{BM}_{2*}(X,\bZ)$ in even degrees
or in the algebraic context the Chow group $CH_{*}(X)$, with 
$$d^!: H_*(X\times Y) \to H_*(X\cap Y)$$
the corresponding refined pullback for the (regular) diagonal embedding $d: M\to M\times M$ of the ambient complex manifold $M$
(as recalled in the next section). Note that for $X$ compact, the topological Euler characteristic $\chi(X)$ of $X$ is given by
$$\chi(X)=deg(c_*(X))=deg(c_0(X))\:.$$
So formula (\ref{intro1}) shows that in general for $X$ and $Y$ compact the Euler characteristic $\chi(X\cap Y)$
of the intersection cannot be given just in terms of $\chi(X)$ and $\chi(Y)$, but that the information of their total
Chern-Schwartz-MacPherson classes $c_*(X)$ and $c_*(Y)$ is needed.\\

For $X$ and $Y$ smooth complex submanifolds, all these different notions of ``trans\-versality'' for singular subspaces
just reduce to the classical notion of trans\-versality, so that $X\cap Y$ also becomes a smooth complex  submanifold of $M$,
with normal bundle 
$$N_{X\cap Y}M = N_XM|_{X\cap Y}\oplus N_YM|_{X\cap Y} \:.$$
And then (\ref{intro1}) easily follows from the fact, that
$$c_*(Z)=c(TZ)\cap [Z] = c(N_ZM)^{-1}\cap \left( c(TM)\cap [Z] \right)$$
for $Z$ a closed smooth complex submanifold of $M$ (with $c$ the total Chern class). 
Also recall that the smooth complex submanifolds $X$ and $Y$ of $M$ 
intersect trans\-versally, iff the diagonal embedding $d: M\to M\times M$ is transversal to $X\times Y$, with $d^{-1}(X\times Y)=X\cap Y$
and 
$$N_{X\cap Y}M = d^*(N_{X\times Y}(M\times M)) = d^*(N_XM\times N_YM)\:.$$
And this last viewpoint can be generalized in different ways to singular complex subspaces. \\

Maybe the best known notion of tranversality for singular $X$ and $Y$ is the transversality as complex Whitney stratified subsets,
i.e. both are endowed with complex Whitney b-regular stratifications such that all strata $S$ of $X$ and $S'$ of $Y$ are transversal.
Equivalently, the diagonal embedding $d$ is transversal to all strata $S\times S'$ of the induced product Whitney stratification of $X\times Y$.
And then the intersection formula
(\ref{intro1}) is well known, see e.g. \cite{CMS}[Thm.3.3] or \cite{Sch4}[Cor.0.1 and the discussion afterwords].
Another notion of transversality for singular complex subspaces $X$ and $Y$ in the ambient complex manifold $M$ was studied  by
Aluffi and Faber \cite{AF, AF2} (and first  introduced and characterized  by Faber \cite{Fa} in the hypersurface case):
\begin{defn}\label{splayed-sets}
$X$ and $Y$ are \emph{splayed at a point} $p\in M$, if there is near $p$ a local analytic isomorphism $M=V_1\times V_2$
 of analytic manifolds so that $X$ resp. $Y$ can be defined by an ideal in the coordinates of $V_1$ resp. $V_2$.
 $X$ and  $Y$
 are \emph{splayed} if they are splayed at all points $p\in X\cap Y$.
\end{defn}
Note that also in the complex algebraic context, these local coordinates are only asked for in the local analytic context.
And Aluffi and Faber \cite{AF, AF2} conjectured (and could prove in some cases) the intersection formula
(\ref{intro1}) for $X$ and $Y$ splayed. 
The problem is of course, that both notions of transversality cannot be directly compared and are of very different type,
with ``stratified transversality'' more of geometric and ``splayedness'' more of algebraic nature.\\

We will gereralize in the next section both notions even to constructible functions $\alpha\in F(X) $ and $\beta\in F(Y)$,
showing that both are stronger than the  micro-local ``non-characteristic intersection'' condition, that the diagonal embedding
$d: M\to M\times M$ is ``non-characteristic'' with respect to the support 
$$supp(CC(\alpha\times \beta))\subset T^*(M\times M)$$
of the characteristic cycle 
of $\alpha\times \beta$. Nevertheless this micro-local ``non-characteristic intersection'' condition implies the following generalization
of the refined intersection formula  (\ref{intro1}) 
even for the Chern-Schwartz-MacPherson classes of constructible functions:

\begin{thm}\label{main-intro}
  Let $X, Y$ be two closed subspaces of the complex (algebraic) manifold $M$ with given constructible functions $\alpha\in F(X)$ and  
  $\beta \in F(Y)$. Assume that the diagonal embedding $d: M\to M\times M$ is non-characteristic with respect to $supp(CC(\alpha\times \beta))$ 
  (e.g. $\alpha$ and $\beta$ are splayed or stratified transversal).
  Then 
  \begin{equation}\label{eq-main-intro}
   d^!\left( c_*(\alpha) \times c_*(\beta) \right) = c(TM)\cap c_*(\alpha \cdot \beta) \in H_*(X\cap Y)\:.
  \end{equation}
  In particular
  \begin{equation}
    c_*(\alpha) \cdot  c_*(\beta)  = c(TM)\cap c_*(\alpha \cdot \beta) \in H_*(M)\:.
  \end{equation}
\end{thm}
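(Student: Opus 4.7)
The plan is to derive (\ref{eq-main-intro}) by combining two ingredients announced in the abstract: the multiplicativity of Chern--Schwartz--MacPherson classes under external products, and a new ``non-characteristic Verdier--Riemann--Roch'' (VRR) theorem applied to the diagonal embedding $d: M\to M\times M$. Concretely, multiplicativity rewrites the input as
$$c_*(\alpha)\times c_*(\beta)\;=\;c_*(\alpha\times\beta)\;\in\;H_*(X\times Y),$$
so the left-hand side of (\ref{eq-main-intro}) becomes $d^!\bigl(c_*(\alpha\times\beta)\bigr)$. On the constructible-function side, the non-characteristic assumption on $d$ with respect to $supp(CC(\alpha\times\beta))$ makes the pullback $d^*(\alpha\times\beta)$ well-defined in the standard micro-local sense, and a pointwise check gives $d^*(\alpha\times\beta)=\alpha\cdot\beta\in F(X\cap Y)$.

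Next I would invoke the non-characteristic VRR theorem for a regular closed embedding $i: X'\to Y'$ which is non-characteristic with respect to $supp(CC(\gamma))$. Modelled on the classical smooth-morphism VRR $c_*(f^*\gamma)=c(T_f)\cap f^*c_*(\gamma)$ with virtual tangent bundle $T_i=-N_i$, this identity should take the form
$$i^!\bigl(c_*(\gamma)\bigr)\;=\;c(N_i)\cap c_*(i^*\gamma).$$
Applied to $i=d$, whose normal bundle is canonically isomorphic to $TM$, and to $\gamma=\alpha\times\beta$, this yields
$$d^!\bigl(c_*(\alpha\times\beta)\bigr)\;=\;c(TM)\cap c_*(\alpha\cdot\beta),$$
which is exactly (\ref{eq-main-intro}).

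The main obstacle is of course proving the non-characteristic VRR itself, and this is where the real work of the paper sits. The natural strategy is to lift the identity to the cotangent bundles $T^*M$ and $T^*(M\times M)$, where ``non-characteristic'' becomes a clean transversality statement for the conormal-type Lagrangian cycle $CC(\gamma)$ with respect to the image of $d^*T^*(M\times M)$, and then compare the two sides via the commutation of $d^!$ with the characteristic cycle construction, absorbing the $c(TM)$ factor through the same mechanism that produces $c(T_f)$ in the smooth VRR. Establishing that this micro-local pullback is compatible with the Gysin map $d^!$ under the non-characteristic hypothesis, with the correct Chern-class correction and without any excess contribution, is the delicate point.

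Finally, the non-refined identity $c_*(\alpha)\cdot c_*(\beta)=c(TM)\cap c_*(\alpha\cdot\beta)\in H_*(M)$ follows by pushing (\ref{eq-main-intro}) forward along the closed inclusion $X\cap Y\hookrightarrow M$: the intersection product on the smooth ambient $M$ is by definition $d^!$ composed with the external product, and the projection formula moves the Chern class $c(TM)$ on the right past the push-forward, since it is pulled back from $M$.
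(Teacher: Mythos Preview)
Your proposal is correct and follows essentially the same route as the paper: multiplicativity $c_*(\alpha)\times c_*(\beta)=c_*(\alpha\times\beta)$, then the non-characteristic Verdier--Riemann--Roch for the diagonal embedding (the paper's Theorem~\ref{VRR-int}, specialized in Corollary~\ref{cor:emb} and Corollary~\ref{cor:diag}) with normal bundle $T_M(M\times M)\simeq TM$, giving $d^!c_*(\alpha\times\beta)=c(TM)\cap c_*(\alpha\cdot\beta)$. One small remark: the pullback $d^*(\alpha\times\beta)=\alpha\cdot\beta$ of constructible functions is always defined (it is just pointwise restriction) and needs no non-characteristic hypothesis; the assumption is only used for the VRR identity itself.
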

 
By definition of the MacPherson Chern class in \cite{MP}, $c_*(Z):=c_*(1_Z)$ for $Z=X,Y$ or $X\cap Y$, so that (\ref{eq-main-intro}) implies the formula (\ref{intro1}) by $1_X\cdot 1_Y=1_{X\cap Y}$. 
Let us mention here, that
Brasselet and Schwartz \cite{BS} (see also \cite{AB})
showed that the MacPherson's Chern class $c_*(1_X)$ corresponds to the Schwartz class $c^S(X) \in H^{2*}_X(M)$ (see \cite{Schw1, Schwa2}) by  Alexander duality for $X$ embedded in the smooth complex manifold $M$. That is why the total homology class $c_*(X)= c_*(1_X)$ is called the Chern--Schwartz--MacPherson class of $X$.

Other natural Chern classes of a singular complex algebraic or analytic set $Z$ are the 
Aluffi-Chern class $c^A_*(Z):=c_*(\nu_Z)$ defined by the constructible Behrend function $\nu_X$ introduced in \cite{B},
or for $Z$ pure-dimensional  the Mather-Chern class $c^M_*(Z):=c_*(Eu_Z)$ defined by the famous constructible Euler obstruction
function $Eu_Z$ of MacPherson \cite{MP}. Both functions $\nu_Z$ and $Eu_Z$ commute with cross-products, restriction to open subsets and switching from the algebraic to the  analytic context, with $Eu_Z=1_Z=(-1)^{dim(Z)}\cdot \nu_Z$ for $Z$ smooth.

\begin{cor} Let $X,Y$ be two closed splayed subspaces of the complex manifold $M$, with $m=dim(M)$.
Then also $\nu_X$ and $\nu_Y$ are splayed, with $\nu_X\cdot \nu_Y=(-1)^{m}\cdot \nu_{X\cap Y}$ so that
\begin{equation}
   d^!\left( c^A_*(X) \times c^A_*(Y) \right) = (-1)^{m}\cdot c(TM)\cap c^A_*(X\cap Y) \in H_*(X\cap Y)\:.
  \end{equation}
If in addition $X$ and $Y$ are pure-dimensinal, then also $Eu_X$ and $Eu_Y$ are splayed, with
$X\cap Y$ pure-dimensional and $Eu_X\cdot Eu_Y=Eu_{X\cap Y}$ so that
\begin{equation}
   d^!\left( c^M_*(X) \times c^M_*(Y) \right) =  c(TM)\cap c^M_*(X\cap Y) \in H_*(X\cap Y)\:.
  \end{equation}
\end{cor}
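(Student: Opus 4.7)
The plan is to reduce both assertions to Theorem~\ref{main-intro} applied, respectively, to the pairs of constructible functions $(\nu_X,\nu_Y)$ and $(Eu_X,Eu_Y)$. For this I need to verify (i) that these pairs are splayed, so that the non-characteristic hypothesis of the theorem is automatically satisfied, and (ii) the asserted pointwise product identities on $X\cap Y$.

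First I would fix $p\in X\cap Y$ together with a local analytic splaying $M=V_1\times V_2$, $X=X_1\times V_2$, $Y=V_1\times Y_2$ near $p$. Using the two stated properties of $\nu$ -- compatibility with cross products, and $\nu_Z=(-1)^{\dim Z}\cdot 1_Z$ for $Z$ smooth -- one gets locally
\[
\nu_X=(-1)^{\dim V_2}\,\nu_{X_1}\times 1_{V_2},\qquad
\nu_Y=(-1)^{\dim V_1}\,1_{V_1}\times \nu_{Y_2}.
\]
In particular $\nu_X$ depends only on the $V_1$-coordinate and $\nu_Y$ only on the $V_2$-coordinate, so the pair is splayed at $p$. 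The same reasoning (with the signs suppressed, since $Eu_Z=1_Z$ on smooth $Z$) shows that $(Eu_X,Eu_Y)$ is splayed.

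Next I would multiply these local expressions on $X\cap Y=X_1\times Y_2$. Using $\nu_{X\cap Y}=\nu_{X_1}\times\nu_{Y_2}$ and $(-1)^{\dim V_1+\dim V_2}=(-1)^m$, the pointwise products collapse to
\[
\nu_X\cdot\nu_Y=(-1)^m\,\nu_{X\cap Y},\qquad Eu_X\cdot Eu_Y=Eu_{X\cap Y}
\]
on $X\cap Y$. For the Mather case one also needs pure-dimensionality of $X\cap Y$: under the purity hypothesis on $X,Y$, the local factors $X_1,Y_2$ inherit pure-dimensionality, and $\dim(X_1\times Y_2)=\dim X+\dim Y-m$ is constant along $X\cap Y$.

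With splayedness of the pairs and the above product identities in hand, Theorem~\ref{main-intro} applied to $(\nu_X,\nu_Y)$ (respectively $(Eu_X,Eu_Y)$) yields both displayed formulas after substitution and the definitions $c_*^A(Z)=c_*(\nu_Z)$, $c_*^M(Z)=c_*(Eu_Z)$. The only substantive point is the first step, namely transferring splayedness from the sets $X,Y$ to their associated Behrend or Euler-obstruction functions; this is bookkeeping once one invokes the compatibility of $\nu,Eu$ with cross products and with restriction to open subsets, but one must keep in mind that the local splaying decomposition $M=V_1\times V_2$ may genuinely depend on $p\in X\cap Y$, which is harmless since splayedness of constructible functions is itself pointwise.
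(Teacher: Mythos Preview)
Your proposal is correct and follows exactly the approach the paper intends: the corollary is stated immediately after the observation that $\nu_Z$ and $Eu_Z$ commute with cross products and restriction to open subsets, with $Eu_Z=1_Z=(-1)^{\dim Z}\nu_Z$ for $Z$ smooth, and no further proof is given there. Your argument is precisely the intended unpacking of these properties together with an application of Theorem~\ref{main-intro}, including the bookkeeping for pure-dimensionality of $X\cap Y$.
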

 
 The proof of theorem \ref{main-intro}  is based on the multiplicativity of
 Chern-Schwartz-MacPherson classes for  cross products  (see \cite{K, KYo}):
$$ c_*(\alpha \times \beta)= c_*(\alpha)\times c_*(\beta) \:,$$
as well as the following  new Verdier-Riemann-Roch theorem
 for ``non-characteristic pullbacks'' (applied to the diagonal embedding $d: M\to M\times M$):
 
 \begin{thm}\label{VRR-int}
Let $f: M\to N$ be a morphism of complex (algebraic) manifolds, with $Y\subset N$ a closed subspace and
$X:=f^{-1}(Y)\subset M$. Assume that $\gamma\in F(Y)$ is a constructible function such that $f$ is
\emph{non-characteristic} with respect to the support $supp(CC(\gamma))\subset T^*N|Y\subset T^*N$ of the characteristic cycle
$CC(\gamma)$ of $\gamma$. Then
\begin{equation}
 f^!(c(TN)^{-1} \cap c_*(\gamma)) = c(TM)^{-1} \cap c_*(f^*(\gamma)) \in H_*(X)\:,
\end{equation}
with $f^!: H_*(Y)\to H_*(X)$ the Gysin map induced by the morphism $f: M\to N$ of complex (algebraic) manifolds.
\end{thm}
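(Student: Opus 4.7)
The plan is to factor $f$ through its graph and treat the smooth projection and the resulting regular embedding separately. Write $f = p \circ i_f$, where $i_f: M \hookrightarrow M\times N$, $x \mapsto (x,f(x))$, is the graph embedding (a regular closed embedding of codimension $n := \dim N$) and $p: M\times N \to N$ is the second projection (smooth of relative dimension $m := \dim M$). The Gysin map factorizes correspondingly as $f^! = i_f^! \circ p^*$. Since $p$ is smooth, pullback along $p$ is automatically non-characteristic, and a direct computation of conormal bundles shows that the non-characteristic hypothesis on $f$ with respect to $\mathrm{supp}\,CC(\gamma)$ is equivalent to non-characteristicity of $i_f$ with respect to $\mathrm{supp}\,CC(p^*\gamma)$: a covector lying in both the conormal of $i_f(M)$ and in $\mathrm{supp}\,CC(p^*\gamma)\subset p^*T^*N$ forces a nonzero element of $\mathrm{supp}\,CC(\gamma)\cap\ker(f^d)$, and conversely.

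For the smooth factor, the classical Verdier--Riemann--Roch theorem of Yokura for CSM classes and smooth morphisms gives, using $T_p \cong p_1^*TM$ for $p_1: M\times N \to M$,
\[
 c_*(p^*\gamma) \;=\; c(T_p)\cap p^*c_*(\gamma) \;=\; p_1^*c(TM)\cap p^*c_*(\gamma).
\]
Combined with the Whitney decomposition $c(T(M\times N)) = p_1^*c(TM)\cdot p^*c(TN)$, this rearranges to
\[
 c(T(M\times N))^{-1}\cap c_*(p^*\gamma) \;=\; p^*\bigl(c(TN)^{-1}\cap c_*(\gamma)\bigr).
\]
Applying $i_f^!$, using the projection formula together with $i_f^*T(M\times N)=TM\oplus f^*TN$ and $i_f^*p^*\gamma=f^*\gamma$, the theorem is reduced to the following non-characteristic Gysin identity for the regular embedding $i_f$:
\[
 i_f^!\, c_*(p^*\gamma) \;=\; c(f^*TN)\cap c_*(f^*\gamma) \in H_*(X).
\]

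I would establish this embedding identity by a stratified-transversality reduction. Choose a complex Whitney b-regular stratification $\mathcal{S}$ of $M\times N$ that is adapted to $p^*\gamma$, in the sense that $\mathrm{supp}\,CC(p^*\gamma)\subset \bigcup_{S\in\mathcal{S}}\overline{T^*_S(M\times N)}$, and refined so as to contain $i_f(M)$ as a union of strata. The non-characteristic hypothesis on $i_f$ then translates precisely into the assertion that $i_f(M)$ meets every stratum $S\not\subset i_f(M)$ transversally. The pair $(i_f(M),M\times N)$ is thus a stratified-transversal pair in the sense of \cite{CMS, Sch4}, and the Gysin identity above becomes a special case of the refined CSM intersection formula in that setting. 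Equivalently, after decomposing $p^*\gamma=\sum_{\bar S}n_{\bar S}\,Eu_{\bar S}$ in the Euler obstruction basis of stratum closures, one verifies the identity termwise using $i_f^*Eu_{\bar S}=Eu_{i_f^{-1}\bar S}$ under transverse pullback together with the smooth VRR applied on a Nash-type desingularization.

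The main technical obstacle is the stratification step: one must produce $\mathcal{S}$ simultaneously adapted to $p^*\gamma$, refined to contain $i_f(M)$, and with the property that enlarging it to include $i_f(M)$ does not introduce spurious conormal components in $\mathrm{supp}\,CC(p^*\gamma)$ along $i_f(M)$. The non-characteristic hypothesis is used essentially here to exclude such new components. A perhaps cleaner conceptual alternative is to bypass stratifications and argue at the Lagrangian cycle level: the non-characteristic hypothesis is exactly what makes the pullback of $CC(p^*\gamma)$ through the correspondence
\[
 T^*M \;\xleftarrow{i_f^d}\; i_f^*T^*(M\times N) \;\longrightarrow\; T^*(M\times N)
\]
well-defined as a Lagrangian cycle; one identifies this pullback with $CC(f^*\gamma)$ up to sign, and then invokes the micro-local description of CSM classes via characteristic cycles, twisted by the Chern class of the normal bundle $N_{i_f}=f^*TN$, to conclude.
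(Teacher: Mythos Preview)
Your graph factorization and the treatment of the smooth factor via Yokura's VRR are correct and match the paper's strategy exactly. The divergence is in the embedding step.

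Your primary approach (a) has a genuine gap, and it is essentially the one you flag yourself without resolving. The non-characteristic hypothesis for $i_f$ is a condition on $supp(CC(p^*\gamma))$, i.e.\ on precisely those conormal varieties $\overline{T^*_S(M\times N)}$ that occur with \emph{nonzero} coefficient in the characteristic cycle. It does \emph{not} translate into transversality of $i_f(M)$ to all strata of a Whitney stratification adapted to $p^*\gamma$: the coefficients attached to some strata may vanish, and nothing then prevents $i_f(M)$ from being non-transversal to those strata. Conversely, any Whitney refinement that forces $i_f(M)$ to be a union of strata will in general introduce new strata (and new conormals) along $i_f(M)$ that are not controlled by $supp(CC(p^*\gamma))$. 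So ``non-characteristic'' is strictly weaker than ``stratified transversal for some adapted Whitney stratification'', and the reduction to \cite{CMS, Sch4} is not available in general. This is exactly why the paper emphasizes that both splayedness and stratified transversality are \emph{strictly stronger} than the micro-local non-characteristic condition.

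Your alternative (b) is the correct route and is precisely what the paper does, but you have only named it, not carried it out. The substantive content is the identification $t_*\,i'^!\,CC(p^*\gamma)=(-1)^{m-n}CC(f^*\gamma)$ (Theorem~\ref{non-cc} in the paper). After the smooth step this is reduced to a closed embedding, and the paper then inducts down to a smooth \emph{hypersurface} $M=\{g=0\}\subset N$. There the key input is Sabbah's formula $k^!\bigl([T^*_Z g]\bigr)=CC\bigl(-\psi_g(\check{Eu}_Z)\bigr)$ relating the fiber of the relative conormal space to nearby cycles, together with the vanishing $\phi_g(\check{Eu}_Z)=0$ under the non-characteristic hypothesis (equivalently, $dg(N)\cap T^*_ZN=\emptyset$), which yields $\psi_g(\check{Eu}_Z)=i^*(\check{Eu}_Z)$. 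Once this Lagrangian pullback identity is in hand, the Segre-class compatibility $f^!s_*(CC(\gamma))=s_*(t_*f'^!CC(\gamma))$ and the description $\check c_*=c(T^*N)\cap s_*\circ CC$ give the VRR formula immediately. None of these ingredients is supplied by the stratified-transversality literature you invoke; they come from the micro-local side (Sabbah, and the intersection formula for Lagrangian cycles).
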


This Verdier-Riemann-Roch theorem for ``non-characteristic pullbacks'' is the main result of this paper,
from which the refined intersection formula (\ref{eq-main-intro}) directly follows in the spirit of Lefschetz' definition of intersection theory 
via cross-products and (refined) pullbacks for the diagonal map (as explained in the next section).
The proof of Theorem \ref{VRR-int} is based on the micro-local approach to Chern-Schwartz-MacPherson classes via characteristic cycles 
of constructible functions (as recalled in the last section, and see e.g. \cite{Gi1, Gi2, Ken1, Sab, Sch-lect}).
Note that the micro-local notion of ``non-characteristic'' has its origin in the theory of (holonomic) ${\mathcal D}$-modules
(as in \cite{Gi1, Gi2}) as well as in the micro-local sheaf theory of Kashiwara-Schapira \cite{KS}.
Nevertheless, in our context we think of it as a micro-local ``transversality condition'' fitting nicely with the geometry of
Chern-Schwartz-MacPherson classes of constructible functions. 
As a byproduct of our proof we  also get the following
``micro-local intersection formula'':

\begin{cor}\label{cor-intro}
Let $M$ a complex (algebraic) manifold of dimension $m=dim(M)$, with $\alpha, \beta\in F(M)$ given constructible functions.
Assume that the diagonal embedding $d: M\to M\times M$ is non-characteristic with respect
to $supp(CC(\alpha\times \beta))$ (e.g. $\alpha, \beta\in F(M)$ are splayed or stratified transversal), with  $supp(\alpha\cdot \beta)$ compact.

Then also  $supp(CC(\alpha)\cap CC(\beta))\subset T^*M$ is compact, with
\begin{equation}\label{mic-intro}
\chi(M;\alpha\cdot \beta) = (-1)^m\cdot deg( CC(\alpha)\cap CC(\beta))\:.
\end{equation} 
\end{cor}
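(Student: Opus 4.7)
The plan is to establish the compactness of $supp(CC(\alpha)\cap CC(\beta))$ by unpacking the non-characteristic condition together with the $\bC^{*}$-invariance of complex conic Lagrangian cycles, and to derive the identity (\ref{mic-intro}) by combining Kashiwara's local index theorem with the pullback formula for characteristic cycles under the non-characteristic morphism $d:M\to M\times M$.

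For the compactness, I would first unravel the non-characteristic hypothesis concretely: since $CC(\alpha\times\beta)=CC(\alpha)\times CC(\beta)$, it says that whenever $(x,v_{1})\in supp(CC(\alpha))$ and $(x,v_{2})\in supp(CC(\beta))$ satisfy $v_{1}+v_{2}=0$, both $v_{1}$ and $v_{2}$ must vanish. Since complex conic Lagrangian cycles in $T^{*}M$ are $\bC^{*}$-invariant in the fibers, and in particular $-CC(\beta)=CC(\beta)$ as cycles, the hypothesis forces
\[
 supp(CC(\alpha))\cap supp(CC(\beta))\subset T^{*}_{M}M\:.
\]
Under the projection $\pi:T^{*}M\to M$ this intersection maps homeomorphically into $supp(\alpha)\cap supp(\beta)=supp(\alpha\cdot\beta)$, which is compact by hypothesis.

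For the identity, I would start from Kashiwara's local index theorem for compactly supported constructible functions on the smooth complex manifold $M$,
\[
 \chi(M;\gamma)=(-1)^{m}\cdot deg\left(s^{*}CC(\gamma)\right) ,
\]
with $s:M\hookrightarrow T^{*}M$ the zero-section and $s^{*}$ the Gysin pullback, applied to $\gamma:=\alpha\cdot\beta=d^{*}(\alpha\times\beta)$. The micro-local pullback formula for non-characteristic morphisms gives
\[
 CC(\alpha\cdot\beta)=(dd^{*})_{*}\left(CC(\alpha)\times_{M}CC(\beta)\right) ,
\]
where $dd^{*}:T^{*}M\oplus T^{*}M\to T^{*}M$ is the fiberwise addition $(v_{1},v_{2})\mapsto v_{1}+v_{2}$ and $CC(\alpha)\times_{M}CC(\beta)$ is the Gysin restriction of $CC(\alpha)\times CC(\beta)$ to the diagonal fiber $T^{*}(M\times M)|_{M}=T^{*}M\oplus T^{*}M$. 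Base change along the Cartesian square formed by $s$ and $dd^{*}$ (whose fiber product is the antidiagonal $\{(v,-v)\}\cong T^{*}M$) together with the projection formula then reduces
\[
 s^{*}CC(\alpha\cdot\beta)=\pi_{*}\left(CC(\alpha)\cap(-CC(\beta))\right) ,
\]
and taking degrees plus using $-CC(\beta)=CC(\beta)$ yields the formula (\ref{mic-intro}).

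The main technical obstacle will be the bookkeeping of signs and normalizations in these various pullback/pushforward operations on characteristic cycles---in particular making sure that the single factor $(-1)^{m}$ predicted in (\ref{mic-intro}) indeed survives unscathed from Kashiwara's theorem and is neither doubled nor cancelled by the Gysin/base-change steps. A quick sanity check with $\alpha=\beta=1_{M}$ on a compact smooth $M$ (where $CC(1_{M})=[T^{*}_{M}M]$) recovers $\chi(M)=(-1)^{m}\cdot(-1)^{m}\chi(M)=\chi(M)$ via the self-intersection identity $s^{*}[T^{*}_{M}M]=(-1)^{m}c_{m}(TM)\cap [M]$, confirming the sign convention.
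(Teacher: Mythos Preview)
Your proposal is correct and follows essentially the same route as the paper's own proof: both start from the global index formula expressing $\chi(M;\alpha\cdot\beta)$ via $s^{!}CC(\alpha\cdot\beta)$, then apply the non-characteristic pullback formula for $CC$ under the diagonal (the paper's Theorem~\ref{non-cc}), and use the same cartesian base-change square relating the zero section $s$ to the fiberwise addition map---your ``antidiagonal'' is exactly the paper's map $(id,a)$---finishing with $a_{*}CC(\beta)=CC(\beta)$ in the complex setting. The only cosmetic difference is where the factor $(-1)^{m}$ enters (you place it in the index formula, the paper in the pullback formula), which, as you already anticipate, is purely a matter of sign convention.
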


In the end of the  next section we also illustrate some other situations,
where such a ``non-characteristic condition'' follows from suitable ``splayedness'' assumptions. 
Characteristic cycles are of ``cotangential nature'', since they are living in the cotangent bundle $T^*M$ of the ambient manifold $M$.
So for the pullback situation of a holomorphic map $f: M\to N$ of complex manifolds we have to study the associated correspondence
$$T^*M \leftarrow f^*TN \rightarrow T^*N$$
of cotangent bundles. And here the ``non-characteristic'' condition shows automatically up for the definition of the pullback of a 
characteristic cycle living in $T^*N$. Let us finally point out, that there are also other \emph{intrinsic}  notions of 
Fulton- and Fulton-Johnson-Chern-classes (see \cite{Fu}[Example 4.2.6])
$$c_*^F(X)=c^*(TM)\cap s_*(C_XM) \, , \, c_*^{FJ}(X)=c^*(TM)\cap   s_*({\mathcal N}_XM)\in H_*(X)$$
of a singular complex variety $X\subset M$ embedded into a complex manifold $M$, which are more of ``tangential nature'' defined via Segre-classes
of the normal cone $C_XM$ resp. the cone associated to the conormal sheaf ${\mathcal N}_XM$ of $X$ in $M$.
And also for them the refined intersection formula (\ref{intro1}) holds under the assumption that $X$ and $Y$ are \emph{splayed} in $M$.
For the Fulton-Chern-classes this was shown by Aluffi and Faber \cite{AF2}[Thm.III].
And in another paper we will give a different proof of this result, very close to the ideas of this paper, which will also 
apply to the Fulton-Johnson-Chern-classes. 

Finally let us point out, that results similar to  Theorem \ref{main-intro} and (\ref{intro1})
are also true for the \emph{Hirzebruch class transformation} $T_{y*}$ of \cite{BSY1, Sch-MSRI}
in the context of complex algebraic mixed Hodge modules, if one asks the ``non-characteristic property'' for the characteristic variety of the underlying (filtered) $\cD$-modules. But in this case the proof is different and follows the lines of \cite{Sch4}
(as explained elsewhere).

\section{Splayed constructible functions}
In this chapter we work in the embedded complex analytic or algebraic
context, with $X$ a closed subspace in the complex manifold $M$.
Let $F(X)=F_{alg}(X)$ or $F(X)=F_{an}(X)$ be the group of $\bZ$-valued constructible function, so that
a constructible function $\alpha\in F(X)$ in the complex algebraic (resp. analytic) context
is a (locally finite) linear combination of indicator functions $1_Z$ with $Z\subset X$ a
closed irreducible subspace. Viewing an algebraic variety as an analytic variety, one gets a canonical
injection $F_{alg}(X)\hookrightarrow F_{an}(X)$.\\

First we extend the definition of \emph{splayedness} form subspaces to constructible functions:

\begin{defn}\label{splayed-fct}
 Let $X, Y$ be two closed subspaces of $M$ with given constructible functions $\alpha\in F(X)$ and  $\beta \in F(Y)$.
 Then $\alpha$ and $\beta$ are \emph{splayed at a point} $p\in M$, if there is near $p$ a local analytic isomorphism $M=V_1\times V_2$
 of analytic manifolds so that $\alpha=\pi_1^*(\alpha')$ and $\beta=\pi_2^*(\beta')$ for some $\alpha'\in F(V_1)$ and 
 $\beta'\in F(V_2)$, with $\pi_i: V_1\times V_2\to V_i$ the projection ($i=1,2$). \\
 $\alpha\in F(X)$ and  $\beta \in F(Y)$
 are \emph{splayed} if they are splayed at all points $p\in X\cap Y$.
\end{defn}

So two algebraically constructible functions $\alpha,\beta$ are by definition splayed (at a point $p$), if this is the
case for them viewed as analytically constructible functions. Let us give some examples:

\begin{ex} 
\begin{enumerate}
 \item If $\alpha \in F(M)$ is \emph{locally constant}, then $\alpha$ and $\beta$ are splayed for any $\beta \in F(Y)$.
 Just take a local isomorphism $U=\{pt\}\times U$ with $\alpha$ constant on $U$ so that $\alpha=\alpha'\cdot 1_U=\pi_1^*(\alpha')$
 for $\alpha'\in \bZ=F(pt)$, with $\pi_1: \{pt\}\times U\to \{pt\}$ the projection.
 
 So if one wants to show that two constructible functions $\alpha\in F(X)$ and  $\beta \in F(Y)$ are splayed,
 then one only needs to check this at all points $p\in M$ were $\alpha$ and $\beta$ are not (locally) constant near $p$.
 In particular the choice of the closed subspaces $X$ and $Y$ in Definition \ref{splayed-fct} doesn't matter.
 \item Let $X, Y$ be two closed subspaces of $M$. Then $\alpha=1_X$ and $\beta=1_Y$ are splayed as constructible funstions,
 if and only if $X$ and $Y$ are splayed as closed subspaces. Moreover, in this case also the constructible indicator functions
 of their complements $1_{M\backslash X}, 1_{M\backslash Y}\in F(M)$ are splayed.
\end{enumerate}
\end{ex}

For $X$ a closed subspace in the complex manifold $M$, let $c_*: F(X)\to H_*(X)$ be the MacPherson Chern class
transformation for constructible functions, were $H_{*}(X)$ denotes either the Borel-Moore homology group in even degrees $H^{BM}_{2*}(X,\bZ)$
or in the algebraic context the Chow group $CH_{*}(X)$. In the next chapter we will explain a possible definition of $c_*$ in this embedded context
$X\subset M$ via the theory of conic Lagrangian cycles in the cotangent bundle $T^*M|X$. This implies in the algebraic context directly the commutativity 
of the following diagram, with $cl$ the cycle map (as in \cite{Fu}[Chapter 19]):
\begin{equation}
 \begin{CD}
  F_{alg}(X) @>>>  F_{an}(X)\\
  @V c_* VV @VV c_* V \\
  CH_{*}(X) @>> cl > H^{BM}_{2*}(X,\bZ)\:.
 \end{CD}
\end{equation}

One of the main results of this paper is the following
\begin{thm}\label{main-th}
  Let $X, Y$ be two closed subspaces of the complex (algebraic) manifold $M$ with given \emph{splayed} constructible functions $\alpha\in F(X)$ and  $\beta \in F(Y)$.
  Then 
  \begin{equation}
   d^!\left( c_*(\alpha) \times c_*(\beta) \right) = c(TM)\cap c_*(\alpha \cdot \beta) \in H_*(X\cap Y)\:.
  \end{equation}
  In particular
  \begin{equation}
    c_*(\alpha) \cdot  c_*(\beta)  = c(TM)\cap c_*(\alpha \cdot \beta) \in H_*(M)\:.
  \end{equation}
\end{thm}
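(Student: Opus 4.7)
The plan is to deduce Theorem \ref{main-th} from Theorem \ref{main-intro} by establishing the geometric implication that \emph{splayedness} of $\alpha$ and $\beta$ forces the micro-local non-characteristic condition for the diagonal $d: M \to M \times M$ with respect to $\text{supp}(CC(\alpha \times \beta))$. Once this is in hand, Theorem \ref{main-intro}---whose own proof combines the cross-product multiplicativity $c_*(\alpha \times \beta) = c_*(\alpha) \times c_*(\beta)$ with the Verdier-Riemann-Roch Theorem \ref{VRR-int} applied to $d$---produces the refined intersection formula directly.

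For the reduction step, I would argue pointwise near each $p \in X \cap Y$. Splayedness supplies a local analytic splitting $M \cong V_1 \times V_2$ with $\alpha = \pi_1^*\alpha'$ and $\beta = \pi_2^*\beta'$. The characteristic cycle is compatible with smooth pullback along a submersion, so $\text{supp}(CC(\alpha))$ sits in the sub-bundle of $T^*M = T^*V_1 \oplus T^*V_2$ consisting of covectors of the form $(\eta_1, 0)$, and $\text{supp}(CC(\beta))$ in covectors of the form $(0, \eta_2)$. Hence $\text{supp}(CC(\alpha \times \beta)) \subset T^*M \times T^*M$ meets the conormal fibre of the diagonal only at covectors of shape $(\xi_1, \xi_2) = ((\eta_1, 0), (0, \eta_2))$, and the non-characteristic kernel condition $\xi_1 + \xi_2 = 0$ then collapses to $\eta_1 = \eta_2 = 0$, as required.

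The assembly of the final formula from Theorem \ref{VRR-int} is then purely formal. Applying VRR to $d$ with $\gamma = \alpha \times \beta$, noting $d^*(\alpha \times \beta) = \alpha \cdot \beta$, and using cross-product multiplicativity gives
\[
d^!\bigl( c(T(M \times M))^{-1} \cap (c_*(\alpha) \times c_*(\beta)) \bigr) \;=\; c(TM)^{-1} \cap c_*(\alpha \cdot \beta).
\]
Via the standard projection formula $d^!(c \cap z) = d^*c \cap d^!z$ and the identity $d^* c(T(M \times M))^{-1} = c(TM)^{-2}$ (coming from $T(M \times M) = \pi_1^* TM \oplus \pi_2^* TM$), capping both sides with $c(TM)^2$ yields the stated identity. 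The ``in particular'' statement follows by pushing forward to $H_*(M)$, since the intersection product on the smooth ambient $M$ is by definition $d^!$ of the external cross product.

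The main obstacle is the first step: extracting from the splayed factorization a clean support description for $CC(\alpha)$ in the split cotangent bundle. This rests on the functoriality $CC(\pi^*\alpha') = \pi^\sharp CC(\alpha')$ for smooth pullback, which confines the conic support of $CC(\pi^*\alpha')$ to the sub-bundle of covectors conormal to the fibres of $\pi$. Once this description is in place, the non-characteristic verification and the VRR bookkeeping are essentially automatic.
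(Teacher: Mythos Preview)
Your proposal is correct and follows essentially the same route as the paper: the paper proves Lemma~\ref{splayed-non-ch} (splayed $\Rightarrow$ non-characteristic for $d$) by exactly your local support computation using (co1)--(co3), and then derives Theorem~\ref{main-th} from the multiplicativity $c_*(\alpha\times\beta)=c_*(\alpha)\times c_*(\beta)$ together with the Verdier--Riemann--Roch theorem for $d$ (Corollary~\ref{cor:diag}, equivalently your unwinding of Theorem~\ref{VRR-int} via $d^*c(T(M\times M))^{-1}=c(TM)^{-2}$). The only cosmetic difference is that the paper packages the VRR step through the normal bundle identification $T_M(M\times M)\simeq TM$, which amounts to the same algebra you carry out explicitly.
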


In the algebraic context $$d^!: CH_{*}(X\times Y)\to CH_*(X\cap Y)$$ is the \emph{refined Gysin map} (\cite{Fu}[Sec.6.2]) 
associated to the regular diagonal embedding
$d: M\to M\times M$. In the analytic context it is the \emph{refined pullback} $$d^!: H^{BM}_{2*}(X\times Y,\bZ)\to H^{BM}_{2*}(X\cap Y,\bZ)$$ which under 
Poincar\'{e} duality corresponds to the pullback $$d^*: H^{2*}_{X\times Y}(M\times M,\bZ)\to H^{2*}_{X\cap Y }(M,\bZ)$$ in cohomology with support.

\begin{cor}\label{cor:spaces}
   Let $X, Y$ be two \emph{splayed} closed subspaces of the complex (algebraic)  manifold $M$. Then
  \begin{equation}
   d^!\left( c_*(X) \times c_*(Y) \right) = c(TM)\cap c_*(X\cap Y) \in H_*(X\cap Y)\:,
  \end{equation}
  in particular
  \begin{equation}\label{cor:spaces2}
    c_*(X) \cdot  c_*(Y)  = c(TM)\cap c_*(X\cap Y) \in H_*(M)\:.
  \end{equation}
  Similarly
  \begin{equation}
    c_*(M\backslash X) \cdot c_*(M\backslash Y)  = c(TM)\cap c_*(M \backslash (X\cup Y)) \in H_*(M)\:.
  \end{equation}
\end{cor}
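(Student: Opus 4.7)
The plan is to deduce the corollary directly from Theorem~\ref{main-th} by choosing the appropriate pair of splayed constructible functions in each case, so that the corollary really functions as the ``indicator function'' specialization of Theorem~\ref{main-th}.

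For the first two displayed equations, I would take $\alpha = 1_X \in F(X)$ and $\beta = 1_Y \in F(Y)$. By the second part of the example following Definition~\ref{splayed-fct}, the hypothesis that $X$ and $Y$ are splayed as closed subspaces of $M$ is equivalent to saying that the indicator functions $1_X$ and $1_Y$ are splayed as constructible functions in the sense of Definition~\ref{splayed-fct}. Since $1_X \cdot 1_Y = 1_{X\cap Y}$ and $c_*(Z) := c_*(1_Z)$ by definition of the Chern-Schwartz-MacPherson class of a space, Theorem~\ref{main-th} applied to $(\alpha,\beta) = (1_X, 1_Y)$ immediately gives
\[
d^!\bigl( c_*(X) \times c_*(Y)\bigr) = c(TM) \cap c_*(X\cap Y) \in H_*(X\cap Y),
\]
and, pushed forward to $H_*(M)$, the product formula $c_*(X) \cdot c_*(Y) = c(TM) \cap c_*(X\cap Y)$.

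For the third equation, the natural choice is $\alpha = 1_{M\setminus X}$ and $\beta = 1_{M\setminus Y}$. The same example explicitly states that if $X$ and $Y$ are splayed then so are the indicator functions $1_{M\setminus X}$ and $1_{M\setminus Y}$ of their complements (one may also see this directly: if a local product decomposition $M = V_1 \times V_2$ near $p \in X \cap Y$ witnesses the splayedness of $X$ and $Y$, then $M\setminus X$ is cut out in the $V_1$-coordinates and $M\setminus Y$ in the $V_2$-coordinates). The product $1_{M\setminus X} \cdot 1_{M\setminus Y}$ equals $1_{M\setminus(X\cup Y)}$, so the product form of Theorem~\ref{main-th} yields
\[
c_*(M\setminus X) \cdot c_*(M\setminus Y) = c(TM) \cap c_*(M\setminus(X\cup Y)) \in H_*(M).
\]

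There is no real obstacle: the entire content of the corollary is packaged inside Theorem~\ref{main-th} together with the two parts of the example describing how splayedness of subspaces translates to splayedness of their indicator functions and of the indicator functions of their complements. The only small verifications are the tautological identities $1_X\cdot 1_Y = 1_{X\cap Y}$ and $1_{M\setminus X}\cdot 1_{M\setminus Y} = 1_{M\setminus(X\cup Y)}$, together with the fact that the product form in $H_*(M)$ is obtained from the refined formula in $H_*(X\cap Y)$ by pushforward along the inclusion $X\cap Y \hookrightarrow M$, as recorded in Theorem~\ref{main-th} itself.
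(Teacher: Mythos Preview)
Your proposal is correct and is exactly the argument the paper intends: the corollary is the specialization of Theorem~\ref{main-th} to the indicator functions $1_X,1_Y$ (respectively $1_{M\setminus X},1_{M\setminus Y}$), using the example after Definition~\ref{splayed-fct} to verify splayedness and the identities $1_X\cdot 1_Y=1_{X\cap Y}$, $1_{M\setminus X}\cdot 1_{M\setminus Y}=1_{M\setminus(X\cup Y)}$. The paper does not even write out a separate proof of this corollary, so there is nothing further to compare.
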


The proof of Theorem \ref{main-th} uses first the multiplicativity 
\begin{equation}\label{mult}
 c_*(\alpha \times \beta)= c_*(\alpha)\times c_*(\beta)
\end{equation}
of the MacPherson Chern class transformation (see \cite{K, KYo}), which by induction on the dimension of the support of the constructible functions
and resolution of singularties follows from the Chern class formula
$$c(TM\times TM')= c(TM)\times c(TM')$$
for the Chern classes of  complex (algebraic) manifolds $M,M'$.\\

The second main ingredient (explained in the next section) is a \emph{Verdier-Riemann-Roch Theorem} for the behaviour of MacPherson Chern classes under a 
\emph{non-charac\-teristic} pullback for a morphism $f: M\to N$ of complex (algebraic) manifolds, based on the Lagrangian approach
to MacPherson Chern classes of constructible functions via the \emph{characteristic cycle} map
$$CC: F(Y) \stackrel{\sim} {\to} L(Y,N) $$
to the group $L(Y,N)=L_{an}(Y,N)$  (resp. $L(Y,N)=L_{alg}(Y,N)$) of conic Lagrangian cycles in $T^*N|Y$ for $Y$ a closed subspace 
of the complex (algebraic) manifold $N$. The characteristic cycle map $CC$ is characterized by (see \cite{Sch-book}[(6.35), p.293 and p.323-324])
\begin{equation}\label{CC-Eu}
 CC(Eu_Z)= (-1)^{dim(Z)}\cdot \left[T^*_ZN\right]
\end{equation}
for $Z\subset Y$ a closed irreducible subspace. Here $Eu_Z\in F(Z)$ is the famous \emph{local Euler obstruction} of $Z$, 
with $Eu_Z|Z_{reg}$ constant of value $1$, and $T_Z^*N:=\overline{T_{Z_{reg}}N}$ the closure of the conormal space to the regular part of $Z$.
In particular $CC$ is compatible with switching from the complex algebraic to the complex analytic context.

\begin{thm}\label{VRR}
Let $f: M\to N$ be a morphism of complex (algebraic) manifolds, with $Y\subset N$ a closed subspace and
$X:=f^{-1}(Y)\subset M$. Assume that $\gamma\in F(Y)$ is a constructible function such that $f$ is
\emph{non-characteristic} with respect to the support $supp(CC(\gamma))\subset T^*N|Y\subset T^*N$ of the characteristic cycle
$CC(\gamma)$ of $\gamma$. Then
\begin{equation}
 f^!(c(TN)^{-1} \cap c_*(\gamma)) = c(TM)^{-1} \cap c_*(f^*(\gamma)) \in H_*(X)\:,
\end{equation}
with $f^!: H_*(Y)\to H_*(X)$ the Gysin map induced by the morphism $f: M\to N$ of complex (algebraic) manifolds.
\end{thm}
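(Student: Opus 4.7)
My strategy is to lift the identity to the cotangent bundles via the characteristic cycle map $CC$, and then deduce it from the compatibility of a Segre-class representation of the CSM class with a ``non-characteristic pullback'' of Lagrangian cycles. To this end, consider the cotangent correspondence
$$T^*M \;\xleftarrow{\;df^*\;}\; f^*T^*N \;\xrightarrow{\;\pi\;}\; T^*N.$$
The non-characteristic hypothesis is precisely the condition that the restriction of $df^*$ to $\pi^{-1}(\mathrm{supp}\,CC(\gamma))$ is proper with finite fibers, so that
$$f^\circ(L) \;:=\; (df^*)_*\,\pi^*(L)$$
defines a conic Lagrangian cycle in $T^*M|X$ for any $L$ supported in $\mathrm{supp}\,CC(\gamma)$. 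Testing on the generators $CC(Eu_Z)=(-1)^{\dim Z}[T^*_ZN]$ and invoking the classical compatibility of the local Euler obstruction with non-characteristic pullback (cf.\ \cite{KS, Sch-book}), one obtains
$$CC(f^*\gamma) \;=\; (-1)^{\dim M - \dim N}\, f^\circ CC(\gamma).$$

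Next, I would use the Sabbah/Kennedy-type micro-local formula for CSM classes. Writing $\sigma_N:T^*N\to N$ for the bundle projection and $s(\,\cdot\,,T^*N)$ for the Segre class of a conic cycle, it takes the shape
$$c(TN)^{-1}\cap c_*(\gamma) \;=\; (-1)^{\dim N}\,(\sigma_N)_*\!\left(c(\sigma_N^*T^*N)\cap s(CC(\gamma),T^*N)\right),$$
which one verifies on the generators $[T^*_ZN]$ by comparison with the Chern-Mather class via the Nash blow-up (see \cite{Sab, Ken1, Sch-lect}). Its virtue is that the right-hand side is a cotangent-bundle quantity depending only on the characteristic cycle inside the vector bundle $T^*N$, so the VRR statement reduces to the claim that this Segre-class construction commutes with the non-characteristic pullback $f^\circ$ along the correspondence.

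The final step is intersection theory in the Cartesian square $f^*T^*N\to T^*N$ over $M\to N$. Flat base change along $\pi$, combined with naturality of Segre classes, gives $\pi^* s(CC(\gamma),T^*N) = s(\pi^*CC(\gamma),f^*T^*N)$. The non-characteristic finiteness of $df^*$ on $\pi^{-1}(\mathrm{supp}\,CC(\gamma))$ then permits proper pushforward to $T^*M$, transporting $s(\pi^*CC(\gamma),f^*T^*N)$ to $s(f^\circ CC(\gamma),T^*M)$ up to a Chern correction involving the virtual bundle $f^*TN - TM$. Together with $c(f^*TN)=f^*c(TN)$ and the projection formula, this correction is exactly what converts $f^!(c(TN)^{-1}\cap c_*(\gamma))$ into $c(TM)^{-1}\cap c_*(f^*\gamma)$ and absorbs the sign $(-1)^{\dim M - \dim N}$ coming from the first paragraph. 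The main obstacle will be making this last matching rigorous: $df^*$ is a morphism of vector bundles of unequal ranks that is only generically of maximal rank on $\pi^{-1}(\mathrm{supp}\,CC(\gamma))$, and one has to verify that the non-characteristic hypothesis is precisely the condition ensuring the Segre-class calculation goes through without residual excess-intersection contributions.
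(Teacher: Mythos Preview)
Your overall architecture is exactly the paper's: express $c(TN)^{-1}\cap c_*(\gamma)$ as a Segre class of $CC(\gamma)$ in $T^*N$, show this Segre-class expression is compatible with the non-characteristic pullback $f^\circ=(df^*)_*\pi^*$ of conic Lagrangian cycles, and feed in the identity $CC(f^*\gamma)=(-1)^{m-n}f^\circ CC(\gamma)$. The Segre-class compatibility you describe in your last paragraph is carried out in the paper via the projective completions $\bP(T^*M|X\oplus\mathbf{1})$ and $\bP(T^*N|Y\oplus\mathbf{1})$ together with the line-bundle isomorphism $\hat t^*\cO_X(1)\simeq \hat f^*\cO_Y(1)$; this makes your ``Chern correction involving $f^*TN-TM$'' rigorous without any excess-intersection analysis.

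The gap is in your second paragraph. You write that $CC(f^*\gamma)=(-1)^{m-n}f^\circ CC(\gamma)$ follows by ``testing on the generators $CC(Eu_Z)$ and invoking the classical compatibility of the local Euler obstruction with non-characteristic pullback (cf.\ \cite{KS, Sch-book})''. This is precisely the content of the paper's Theorem~\ref{non-cc}, and it is \emph{not} a citation: the Kashiwara--Schapira result \cite{KS}[Prop.~9.4.3] is formulated under the stronger hypothesis that $f$ is non-characteristic for the \emph{micro-support} of a sheaf, not merely for $supp(CC(\gamma))$, and there is no off-the-shelf statement that $f^*Eu_Z$ has the characteristic cycle $(-1)^{m-n}(df^*)_*\pi^*[T^*_ZN]$. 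The paper supplies the missing argument by factoring $f$ through its graph, treating the submersion case via the multiplicativity $Eu_{M\times Z}=1_M\times Eu_Z$, and reducing the closed-embedding case by induction to a smooth hypersurface $i:\{g=0\}\hookrightarrow N$. There the key input is Sabbah's formula $k^!([T^*_Zg])=CC(-\psi_g(\check Eu_Z))$ together with the vanishing $\phi_g(\check Eu_Z)=0$ under the non-characteristic assumption (via \cite{BLS} or the micro-local intersection formula), which yields $\psi_g(\check Eu_Z)=i^*(\check Eu_Z)$. Without this step your proposal assumes what has to be proved.
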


Also note that
$$ f^!\left( c(TN)^{-1} \cap (-) \right) = f^*(c(TN)^{-1}) \cap f^!(-) = c(f^*TN)^{-1}\cap f^!(-)\:.$$
Before we recall the definition of \emph{non-characteristic}, we need to introduce the following commutative diagram
(whose right square is cartesian, see for example \cite{KS}[(4.3.2), p.199] or \cite{Sch-book}[(4.15), p.249]):
\begin{equation} \label{eq:diag0} \begin{CD}
T^{*}M|X @<t=t_f << f^{*}(T^{*}N|Y) @> f'>> T^{*}N|Y \\
@VV \pi_{X} V  @VV \pi V  @VV \pi_{Y} V \\
X @= X @> f>> Y \:.
\end{CD} \end{equation}
Here $f'$ is the map induced by base change, whereas $t$ is the dual of the differential of $f$.
Then $f$ is by definition \emph{non-characteristic} with respect to a closed conic subset $\Lambda\subset T^*N|Y$
(i.e. a closed complex analytic (or algebraic) subset invariant under the $\bC^*$-action given by multiplication on the fibers of the 
vector bundle $T^*N|Y$), if
\begin{equation}
 f'^{-1}(\Lambda) \cap Ker(t) \subset f^{*}(T^{*}_NN|Y)\:,
\end{equation}
with $f^{*}(T^{*}_NN|Y)$ the zero section of the vector bundle $f^{*}(T^{*}N|Y)$ (compare  also with \cite{KS}[Def.5.4.12] or \cite{Sch-book}[p.255]).\\

If for example $f: M\to N$ is a \emph{submersion}, then $t: f^{*}(T^{*}N|Y) \hookrightarrow T^{*}M|X$ is an injection so that
$Ker(t) = f^{*}(T^{*}_NN|Y)$ is just the zero section of the vector bundle $f^{*}(T^{*}N|Y)$ . So in this case $f$ is non-characteristic
with respect to any  closed conic subset $\Lambda\subset T^*N|Y$.

\begin{cor}
 Let $f: M\to N$ be a submersion of complex (algebraic) manifolds, with $T_f$ the bundle of tangents to the fibers of $f$.
Let $Y\subset N$ a closed subspace and
$X:=f^{-1}(Y)\subset M$. Then
\begin{equation}
c(T_f)\cap f^!(c_*(\gamma)) =  c_*(f^*(\gamma)) \in H_*(X)
\end{equation}
for any $\gamma\in F(Y)$,
with $f^!: H_*(Y)\to H_*(X)$ the Gysin map induced by the smooth morphism $f: X\to Y$.
\end{cor}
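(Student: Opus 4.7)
The plan is to derive this corollary directly from Theorem \ref{VRR} applied to the submersion $f: M\to N$. The remark immediately preceding the corollary already establishes that any submersion is non-characteristic with respect to every closed conic subset $\Lambda\subset T^*N|Y$, because in that case the cotangent differential $t: f^*(T^*N|Y)\hookrightarrow T^*M|X$ is injective and so $\mathrm{Ker}(t)$ equals the zero section. Hence the hypothesis of Theorem \ref{VRR} is satisfied automatically for \emph{any} constructible function $\gamma\in F(Y)$, and we obtain
\begin{equation*}
 f^!\!\left( c(TN)^{-1} \cap c_*(\gamma) \right) = c(TM)^{-1} \cap c_*(f^*(\gamma)) \in H_*(X).
\end{equation*}

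Next, I would rewrite the left-hand side using the standard compatibility of the refined Gysin pullback with cap products by pulled-back cohomology classes (projection formula): for any $\eta\in H^*(N)$ and any $z\in H_*(Y)$,
\begin{equation*}
 f^!(\eta\cap z) = f^*(\eta)\cap f^!(z).
\end{equation*}
Applying this with $\eta = c(TN)^{-1}$ turns the VRR identity into
\begin{equation*}
 c(f^*TN)^{-1} \cap f^!(c_*(\gamma)) = c(TM)^{-1} \cap c_*(f^*(\gamma)).
\end{equation*}

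Finally, I would invoke the short exact sequence of vector bundles
\begin{equation*}
 0 \to T_f \to TM \to f^*TN \to 0
\end{equation*}
coming from $f$ being a submersion. By the Whitney sum formula this gives $c(TM) = c(T_f)\cdot c(f^*TN)$, equivalently $c(TM)^{-1}\cdot c(f^*TN) = c(T_f)^{-1}$. Capping the displayed equation above with $c(f^*TN)$ on both sides (equivalently, capping with $c(TM)$ and dividing by $c(f^*TN)$) and rearranging yields the desired identity
\begin{equation*}
 c(T_f)\cap f^!(c_*(\gamma)) = c_*(f^*(\gamma)) \in H_*(X).
\end{equation*}

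There is essentially no real obstacle once Theorem \ref{VRR} is in hand: the corollary is purely a bookkeeping exercise combining the automatic non-characteristic property of submersions, the projection formula for the Gysin map $f^!$, and the multiplicativity of total Chern classes in the relative tangent sequence. The only point where some care is needed is ensuring the projection formula applies in both the Chow and the Borel–Moore setting for the refined Gysin map; but this is standard (see \cite{Fu}[Thm.6.2]) and requires no additional hypothesis beyond the regularity of the embedding / smoothness of $f$ that is already built in.
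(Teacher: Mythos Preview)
Your proposal is correct and matches the paper's approach exactly: the paper surrounds the corollary with precisely the three ingredients you use---the observation just before the corollary that a submersion is automatically non-characteristic, the projection formula $f^!(c(TN)^{-1}\cap(-))=c(f^*TN)^{-1}\cap f^!(-)$ noted right after Theorem~\ref{VRR}, and the Whitney identity $c(TM)=c(T_f)\cup c(f^*TN)$ from the relative tangent sequence stated immediately after the corollary. There is nothing to add.
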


Note that $c(TM) = c(T_f) \cup c(f^*TN)$ due to the short exact sequence of vector bundles
$$0\to T_f \to TM \to f^*TN \to 0 \:.$$ 
This Verdier-Riemann-Roch theorem is true for any smooth morphism $f: X\to Y$ of complex (algebraic) varieties
(see  \cite{Yo4}, \cite{FM}[p.111] and \cite{BSY1}[Cor.3.1(3)]). The Verdier-Riemann-Roch theorem for a \emph{smooth} morphism also follows from 
the existence of a corresponding \emph{bivariant} Chern class transformation as in \cite{Br, BSY2}
(as explained in \cite{FM}[Prop.6B, p.67] for a corresponding bivariant Stiefel-Whitney class transformation).
But here in this paper we are interested in the case of a closed  \emph{embedding} $i:M\to N$ of
complex (algebraic) manifolds, which is not covered by the bivariant context (compare \cite{FM}[Sec.10.7, p.112]).

\begin{cor}\label{cor:emb}
 Let $i: M\to N$ be a closed embedding of complex (algebraic) manifolds, with normal bundle $T_MN$.
Let $Y\subset N$ be a closed subspace, with
$X:=i^{-1}(Y)=M\cap Y\subset M$. Assume that $\gamma\in F(Y)$ is a constructible function such that $i$ is
\emph{non-characteristic} with respect to the support $supp(CC(\gamma))$ of the characteristic cycle
$CC(\gamma)$ of $\gamma$. Then
\begin{equation}
i^!(c_*(\gamma)) = c(T_MN) \cap c_*(i^*(\gamma)) \in H_*(X)\:,
\end{equation}
with $i^!: H_*(Y)\to H_*(X)$ the Gysin map induced by the regular embedding $i: M\to N$. 
\end{cor}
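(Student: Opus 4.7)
The plan is to derive Corollary \ref{cor:emb} as a direct algebraic reformulation of Theorem \ref{VRR} applied to $f=i$, by combining the projection formula for the refined Gysin map with the Whitney sum formula for the normal bundle sequence of the embedding. No new geometric input beyond Theorem \ref{VRR} should be required, since the non-characteristic hypothesis in the corollary is exactly the hypothesis of the theorem.

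First, I would feed the data $f=i:M\to N$ and $\gamma\in F(Y)$ into Theorem \ref{VRR}, which immediately yields
\begin{equation}\label{eq:plan-vrr}
 i^!\bigl(c(TN)^{-1}\cap c_*(\gamma)\bigr) \;=\; c(TM)^{-1}\cap c_*(i^*\gamma)\;\in\; H_*(X).
\end{equation}
Next I would rewrite the left-hand side of \eqref{eq:plan-vrr} using the projection formula for the refined Gysin map of the regular embedding $i$ (see \cite{Fu}[Sec.6.2] in the algebraic setting, and in the analytic setting via Poincar\'e duality reduction to $i^*$ in cohomology with supports, as already invoked in the text preceding Corollary \ref{cor:spaces}):
$$i^!\bigl(c(TN)^{-1}\cap c_*(\gamma)\bigr) \;=\; i^*\bigl(c(TN)^{-1}\bigr)\cap i^!\bigl(c_*(\gamma)\bigr) \;=\; c(i^*TN)^{-1}\cap i^!\bigl(c_*(\gamma)\bigr).$$

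From the short exact sequence $0\to TM\to i^*TN\to T_MN\to 0$ of vector bundles on $M$, Whitney multiplicativity of the total Chern class gives $c(i^*TN)=c(TM)\cdot c(T_MN)$. Capping \eqref{eq:plan-vrr} with the unit $c(i^*TN)\in H^*(M)$ then transforms it into
$$i^!\bigl(c_*(\gamma)\bigr) \;=\; \bigl(c(i^*TN)\cdot c(TM)^{-1}\bigr)\cap c_*(i^*\gamma) \;=\; c(T_MN)\cap c_*(i^*\gamma),$$
which is exactly the claimed identity in $H_*(X)$.

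I do not anticipate any genuine obstacle at this stage. Beyond invoking Theorem \ref{VRR}, the whole derivation is a formal manipulation of characteristic classes via the normal bundle sequence and the projection formula for $i^!$. The substantive micro-local content --- namely, that the non-characteristic hypothesis permits a well-defined Lagrangian pullback of $CC(\gamma)$ which matches the refined Gysin map on the Chern class side --- is already entirely absorbed into Theorem \ref{VRR} itself, so no additional verification is needed to pass from the theorem to the corollary.
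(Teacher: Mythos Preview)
Your proposal is correct and follows exactly the paper's own route: the paper records the projection formula $f^!(c(TN)^{-1}\cap(-))=c(f^*TN)^{-1}\cap f^!(-)$ immediately after Theorem \ref{VRR}, and then for Corollary \ref{cor:emb} simply notes the Whitney identity $c(i^*TN)=c(TM)\cup c(T_MN)$ from the normal bundle sequence, which is precisely your derivation.
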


Note that $c(i^*TN) = c(TM) \cup c(T^*_MN)$ due to the short exact sequence of vector bundles
$$0\to TM \to i^*TN \to T_MN \to 0 \:.$$ 
Consider for example a stratification of $Y$ by locally closed complex (algebraic) submanifolds $S\subset Y$, which is
\emph{Whitney a-regular}, i.e. such that $$\Lambda:=\bigcup_S T^*_SN\subset T^*N|Y$$ is \emph{closed}.
Then the embedding $i$ is non-characteristic with respect to $\Lambda$, if and only if $M$ is \emph{transversal} to all strata
$S$ of $Y$ (see \cite{Sch-book}[p.255]). So the property \emph{non-characteristic} is a micro-local version (or generalization) of a stratified 
transversality condition. If this is the case, then $i$ is by (\ref{CC-Eu})  
non-characteristic with respect to all $Eu_{\bar{S}}$ and for $Y$ irreducible
(or pure dimensional) also to $Eu_Y$.

If the stratification is also Whitney b-regular (which also implies a-regularity), then 
$$supp(CC(\gamma))\subset \Lambda=\bigcup_S T^*_SN$$
for any $\gamma$ which is constructible with respect to this stratification, i.e. such that
$\gamma|S$ is locally constant for all $S$ (see \cite{Sch-book}[sec.5.0.3).
So if $M$ is transversal to all strata $S$ of a Whitney b-regular stratification, then
$i$ is non-characteristic for all $\gamma$ which are constructible with respect to this stratification.
For example the ambient manifold $N=P^n(\bC)$ is a complex projective space and $M=H$ is a \emph{generic}
hyperplane.

\begin{rem}
 For another approach to Corollary \ref{cor:emb} based on \emph{Verdier specialization} instead of the theory of
 ``non-characteristic pullback for Lagrangian cycles'' see \cite{Sch4}[Cor.0.1, p.7]. 
\end{rem}

In this paper we are especially interested in the \emph{diagonal embedding} $d: M\to M\times M$
for $M$ a complex (algebraic) manifold, so that
$t: T^*M\times_M T^*M \to T^*M$ is just the addition map in the fibers. Also note that the normal bundle
$T_M(M\times M)$ of the diagonal embedding $d$ is isomorphic to $TM$.

\begin{cor}\label{cor:diag}
 Let $M$ be a complex (algebraic) manifold, with $d: M\to M\times M$ the diagonal embedding.
Let $Y\subset M\times M$ be a closed subspace, with
$X:=d^{-1}(Y)\subset M$. Assume that $\gamma\in F(Y)$ is a constructible function such that $d$ is
\emph{non-characteristic} with respect to the support $supp(CC(\gamma))$ of the characteristic cycle
$CC(\gamma)$ of $\gamma$. Then
\begin{equation}\label{VRR-nc}
d^!(c_*(\gamma)) = c(TM) \cap c_*(d^*(\gamma)) \in H_*(X)\:,
\end{equation}
with $d^!: H_*(Y)\to H_*(X)$ the Gysin map induced by the diagonal embedding $d: M\to M\times M$. 
\end{cor}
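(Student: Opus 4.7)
The plan is to deduce the statement immediately from Corollary \ref{cor:emb} applied to the closed regular embedding $i = d : M\to M\times M$, since Theorem \ref{VRR} and its embedding corollary were set up precisely for this purpose. What remains is only to identify the normal bundle of $d$ and to check that the non-characteristic hypothesis for the diagonal matches the one already made in Corollary \ref{cor:emb}.

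For the bundle identification, one has $d^{*}T(M\times M)\simeq TM\oplus TM$, with the differential of $d$ realized as the diagonal map $v\mapsto (v,v)$, hence
\begin{equation*}
T_M(M\times M)=(TM\oplus TM)/\Delta(TM)\simeq TM
\end{equation*}
via either projection. Dually, in diagram (\ref{eq:diag0}) with $f=d$ one obtains $d^{*}(T^{*}(M\times M))\simeq T^{*}M\times_M T^{*}M$, and $t_d$ is the fiberwise addition $(\xi,\eta)\mapsto \xi+\eta$, as already noted just before the statement. Its kernel is the antidiagonal $\{(\xi,-\xi)\}$, so the non-characteristic condition for $d$ with respect to $\Lambda:=\mathrm{supp}(CC(\gamma))$ is precisely the condition that $d'^{-1}(\Lambda)$ contain no element $(\xi,-\xi)$ with $\xi\neq 0$; this is the hypothesis we are given on $\gamma$.

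Granting these identifications, Corollary \ref{cor:emb} applied to $i=d$ gives
\begin{equation*}
d^{!}(c_*(\gamma))=c(T_M(M\times M))\cap c_*(d^{*}(\gamma))=c(TM)\cap c_*(d^{*}(\gamma))\in H_*(X),
\end{equation*}
which is the desired formula (\ref{VRR-nc}). No genuine obstacle is expected at this step: the whole technical core has been absorbed into Theorem \ref{VRR}, which will be established in the last section via characteristic cycles and non-characteristic pullbacks of conic Lagrangian cycles. The only role of the present corollary is to repackage that result in the specific form needed for the diagonal, preparing its combination with the cross-product multiplicativity (\ref{mult}) of $c_*$ in the proof of Theorem \ref{main-th}.
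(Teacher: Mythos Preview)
Your proof is correct and follows exactly the paper's intended route: the paper states just before Corollary~\ref{cor:diag} that $T_M(M\times M)\simeq TM$ and that $t$ is fiberwise addition, and then simply specializes Corollary~\ref{cor:emb} to $i=d$. Your additional remark identifying $\operatorname{Ker}(t_d)$ with the antidiagonal is a helpful elaboration but not required, since the non-characteristic hypothesis is taken as given.
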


Consider for example two closed subspaces $X,X'\subset M$ with $Y:=X\times X'$ and $d^{-1}(X\times X')=X\cap X'$.
For $\alpha\in F(X)$ and $\beta\in F(X')$ we have the constructible function $\alpha\times \beta \in F(X\times X')$,
with 
$$\alpha\times \beta((p,p')):=\alpha(p)\cdot \beta(p') \quad \text{for all $p\in X, p'\in X'$.}$$
Then $d^*(\alpha\times \beta)=\alpha \cdot \beta$. Let $\alpha$ resp. $\beta$ be constructible with respect to a 
Whitney b-regular stratification of $X$ resp. $X'$ with strata $S$ resp. $S'$. Then $\alpha\times \beta$ is constructible
with respect to the Whitney b-regular product stratification of $X\times X'$ with strata $S\times S'$.
Assume now that these stratifications of $X$ and $X'$ are \emph{transversal} in the sense that all strata $S$ of $X$
are transversal to all strata $S'$ of $X'$. Then the diagonal embedding $d:M \to M\times M$ is transversal to the 
product stratification of $X\times X'$ so that $d$ is \emph{non-characteristic} with respect to $\alpha\times \beta$.
So by Corollary \ref{cor:diag} and the multiplicativity (\ref{mult}) of the MacPherson Chern class one  gets in this 
way a proof of Theorem \ref{main-intro}. with
\begin{equation} \begin{split}
 d^!(c_*(\alpha) \times c_*(\beta)) &= d^!(c_*(\alpha\times \beta)) \\
&= c(TM) \cap c_*(\alpha\cdot \beta)) \in H_*(X\cap X')\:.
\end{split}\end{equation}

Theorem \ref{main-th} states the same result under the assumption that $\alpha$ and $\beta$ are 
\emph{splayed}, which should be seen as another transversality assumption, similar but different from the 
\emph{stratified transversality} used above. In fact, splayedness implies \emph{locally} in the analytic topology
this stratified transversality: Assume $M=V_1\times V_2$ is a product
 of complex analytic manifolds, with $\alpha=\pi_1^*(\alpha')$ and $\beta=\pi_2^*(\beta')$ for some $\alpha'\in F(V_1)$ and 
 $\beta'\in F(V_2)$, with $\pi_i: V_1\times V_2\to V_i$ the projection ($i=1,2$).
Take a Whitney b-regular stratification of $V_1$ resp. $V_2$ with strata $S$ resp. $S'$, so that $\alpha'$ resp. $\beta'$
are constructible with respect to them. Then $\alpha$ resp. $\beta$ is constructible with respect to the two
Whitney b-regular stratifications of $M$ with strata $S\times V_2$ resp. $V_1\times S'$,
which (trivially) intersect transversaly. And this implies again the global non-characteristic property:
\begin{center}
 splayed $\quad \Rightarrow \quad $ local stratified transversality \quad $ \Rightarrow \quad $ non-characteristic.
\end{center}

Then the proof of Theorem \ref{main-th}
 follows (as before) from Corollary \ref{cor:diag} and the multiplicativity
(\ref{mult}) of the MacPherson Chern class.
In some sense this implication together with Corollary \ref{cor:diag} give a ``mechanism
obtaining intersection-theoretic identities from local analytic data''
as asked for in \cite{AF}[Rem.3.5].\\

In the following we give an even simpler proof of the implication 
 \begin{center}
 splayed $\quad \Rightarrow \quad $  non-characteristic,
\end{center}
which also applies to other interesting situations. To shorten the notation, 
and also to better emphasize the underlying principle of proof, let us introduce 
the \emph{closed conic} subset
$$co(\alpha):=supp(CC(\alpha))\subset T^*M|X\subset T^*M$$
for $\alpha\in F(X)$ with $X$ a closed subspace of the complex (algebraic) manifold.
The three formal properties we need are:
\begin{itemize}
 \item[(co1)] $co$ is locally defined in the sense that it commutes with restriction to open submanifolds of $M$.
 \item[(co2)] We have the multiplicativity 
 $$co(\alpha\times \beta) \subset co(\alpha)\times co(\beta) \subset T^*M\times T^*M=T^*(M\times M).$$
 \item[(co3)] For the projection $\pi: M\times N\to M$ from the product of two manifolds one has
 $$co(\pi^*(\alpha))\subset co(\alpha) \times T^*_NN \subset T^*M\times T^*N=T^*(M\times N),$$
 with $ T^*_NN$ the zero section of $T^*N$.
\end{itemize}
Note that in the case of  $co(\alpha):=supp(CC(\alpha))$ the support of 
the characteristic cycle $CC(\alpha)$ of a constructible function, these properties follow for
example from (\ref{CC-Eu}) together with the multiplicativity (which holds in the algebraic context over a base field of
characteristic zero):
\begin{equation} \label{mult-eu}
 Eu_{Z\times Z'}= Eu_Z \times Eu_{Z'}\:.
\end{equation}

\begin{lem}\label{splayed-non-ch}
 Let $X, Y$ be two closed subspaces of the complex (algebraic) manifold $M$ with given 
 \emph{splayed} constructible functions $\alpha\in F(X)$ and  $\beta \in F(Y)$.
  Then the diagonal embedding $d: M\to M\times M$ is \emph{non-characteristic} with respect
  to $co(\alpha\times \beta)$.
\end{lem}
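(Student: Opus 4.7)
The plan is to verify the non-characteristic condition pointwise over the diagonal $(p,p)$ with $p \in X\cap Y$, reducing the verification to a purely formal computation via the properties (co1)--(co3). For the diagonal embedding $d: M \to M\times M$, the map $t: T^*M \oplus T^*M \to T^*M$ is fiberwise addition, so $\ker(t) = \{(\xi,-\xi)\}$. Thus what must be shown is that the only covector $(\xi,\eta)$ in the fiber of $co(\alpha\times\beta)$ over $(p,p)$ with $\xi + \eta = 0$ is the zero covector, i.e.\ lies in the zero section.

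First I would fix $p \in X\cap Y$ and invoke (co1) to pass to an analytic neighborhood; splayedness then supplies a product decomposition $M = V_1\times V_2$ near $p$ with $\alpha = \pi_1^*\alpha'$ and $\beta = \pi_2^*\beta'$. Applying (co3) (and its obvious symmetric version for the second projection, obtained by swapping factors) yields
\[ co(\alpha) \subset co(\alpha')\times T^*_{V_2}V_2, \qquad co(\beta) \subset T^*_{V_1}V_1 \times co(\beta') \]
as subsets of $T^*M = T^*V_1 \oplus T^*V_2$. Combining these with (co2) produces
\[ co(\alpha\times\beta) \;\subset\; \bigl(co(\alpha')\times T^*_{V_2}V_2\bigr)\times\bigl(T^*_{V_1}V_1\times co(\beta')\bigr) \]
in $T^*(M\times M) = T^*V_1\oplus T^*V_2\oplus T^*V_1\oplus T^*V_2$.

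The conclusion is then automatic. Writing a covector in this ambient bundle as $(\xi_1,\xi_2;\eta_1,\eta_2)$, the above inclusion forces $\xi_2 = 0$ and $\eta_1 = 0$; the kernel condition for the diagonal adds $\xi_1+\eta_1 = 0$ and $\xi_2+\eta_2 = 0$, whence $\xi_1 = \eta_2 = 0$ as well. There is no real obstacle here: the three properties (co1)--(co3) are tailor-made for this reduction, and the conceptual point is merely that the ``first-factor-only'' covectors produced by $\alpha$ and the ``second-factor-only'' covectors produced by $\beta$ are complementary in exactly the way needed to defeat the diagonal kernel. The only small subtlety worth flagging is the use of the symmetric form of (co3), which is immediate but should be mentioned.
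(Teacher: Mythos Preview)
Your proposal is correct and follows essentially the same approach as the paper: localize via (co1), use splayedness plus (co3) to confine $co(\alpha)$ and $co(\beta)$ to complementary factors of $T^*V_1\times T^*V_2$, invoke (co2), and then observe that any element of the kernel of the addition map must lie in the zero section. The paper phrases the final step using the antipodal map (writing $(p,\omega)\in co(\alpha)\cap a_*(co(\beta))$) rather than your explicit four-component coordinate computation, but the content is identical.
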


\begin{proof}
 Note that even if we are working in a complex algebraic context, the non-characteristic property in this
 context follows already from the corresponding non-characteristic property of the associated complex analytically constructible functions.
 Moreover, by (co1) this property can be locally checked in the analytic topology. By the splayedness condition, we can assume
 $M=V_1\times V_2$ is a product
 of analytic manifolds, with $\alpha=\pi_1^*(\alpha')$ and $\beta=\pi_2^*(\beta')$ for some $\alpha'\in F(V_1)$ and 
 $\beta'\in F(V_2)$, with $\pi_i: V_1\times V_2\to V_i$ the projection ($i=1,2$).
 But then one gets by (co3):
 $$co(\alpha)\subset co(\alpha')\times T^*_{V_2}V_2 \subset T^*V_1\times T^*_{V_2}V_2$$
 and
 $$co(\beta)\subset  T^*_{V_1}V_1 \times co(\beta')) \subset T^*_{V_1}V_1 \times T^*V_2\:.$$
Assume now that $t((p,\omega),(p,\omega'))= (p,\omega+\omega')=0\in T^*M|p$ for some
$$((p,\omega),(p,\omega'))\in co(\alpha\times \beta) \subset co(\alpha)\times co(\beta) \:.$$ 
Then $\omega'=-\omega$ and 
$$(p,\omega) \in co(\alpha)\cap a_*(co(\beta)) \:,$$
with $a: T^*M\to T^*M$ the antipodal map. 
Therefore $(p,\omega)$ belongs to
 $$ \left( T^*V_1\times T^*_{V_2}V_2 \right)  \cap \left( T^*_{V_1}V_1 \times T^*V_2 \right) = T^*_{V_1}V_1 \times T^*_{V_2}V_2 \:,$$
 the zero section of $T^*M$. 
\end{proof}

Let us finish this section with some other interesting examples, where a similar notion of splayedness can be introduced.

\begin{ex}
 Let us consider the MacPherson Chern class transformation $c_*: F(X)\to CH_*(X)$ in the embedded algebraic context over a base 
 field of characteristic zero (see e.g. \cite{Ken1, Sch-lect}). Then one can introduce the notion of splayedness for two constructible functions as before,
 but asking the corresponding ``splitting'' locally in the Zariski- or \'{e}tale topology (which in the complex algebraic context is of course much 
 stronger than working locally in the analytic topolgy). Using the corresponding characteristic cycle map $CC$ characterized by 
 (\ref{CC-Eu}), all of the results and their proofs of this and the following chapter apply. For the important comparison
 of the non-characteristic
 pullback of characteristic cycles with the corresponding pullback for constructible function
as in Theorem \ref{non-cc}, one can use for example the Lefschetz principle to reduce
 this to the complex algebraic context studied here.
\end{ex}

\begin{ex}
Consider the embedded real  subanalytic  or semi-algebraic  context, with $X$ a closed subanalytic (or semi-algebraic) subset in an
ambient real analytic (Nash-) manifold $M$, and $F(X,\bZ_2)$ the corresponding group of
$\bZ_2$-valued constructible functions. Then one can consider (as in \cite{FuMC,  Sch-lect}) the corresponding characteristic cycle
map 
$$CC_2: F(X,\bZ_2)\stackrel{\sim}{\to} L(X,M)\otimes \bZ_2$$
to conic subanalytic (or semi-algebraic) Lagrangian cycles with $\bZ_2$ coefficients 
in $T^*M|X$. Since we are working with $\bZ_2$-coefficients, no orientability of $M$ is needed.
But here conic just means $\bR^+$-invariant. For the Lagrangian approch to Stiefel-Whitney classes of such constructible functions, it is important to work
only with those constructible functions, for which  $CC_2(\alpha)$ is also $\bR^*$-invariant so that it can be projectivised as in the complex
context. This just means $a_*CC_2(\alpha)=CC_2(\alpha)$ for the antipodal map $a: T^*M\to  T^*M$, and corresponds by a beautiful observation
of Fu and McCrory (\cite{FuMC}) to the classical \emph{local Euler condition} of Sullivan (a sort of self duality).
Introducing the local splayedness condition as before, one gets first the important property that for
$\alpha$ and $\beta$ \emph{splayed} and both satisfying the \emph{local Euler condition}, also $\alpha\cdot \beta$ satisfies the 
local Euler condition. For example if $X,Y$ are two splayed closed subanalytic (or semi-algebraic) \emph{Euler subspaces} of $M$
(i.e. such that $1_X,1_Y$ satisfy the local Euler condition), then also the intersection $X\cap Y$ is an Euler space.

 Let $F^{Eu}(X;\bZ_2)\subset F(X,\bZ_2)$ denote the corresponding subgroup of constructible functions satisfying
the ``local Euler condition''. Then one can introduce the Stiefel-Whitney class transformation
$$w_*: F^{Eu}(X;\bZ_2)\to H^{BM}_*(X,\bZ_2)$$
with the help of the corresponding $\bR^*$-invariant Lagrangian cycles very similar to the approach to the MacPherson Chern class $c_*$  in the complex context
discussed in the next chapter (see \cite{FuMC,  Sch-lect}). Then all our results and proofs also apply to this context 
(as will be explained somewhere else). 

If for example  $X,Y$ are two splayed closed subanalytic (or semi-algebraic) \emph{Euler subspaces} of $M$, then
\begin{equation}
w_*(X) \cdot  w_*(Y)  = w(TM)\cap w_*(X\cap Y) \in H^{BM}_*(M,\bZ_2)\:.
\end{equation}
For a similar result about the Stiefel-Whitney class of the transversal
intersection of two Euler spaces in the $pl$-context compare with
\cite{Matsui}.
\end{ex}

\begin{ex}
 We consider the micro-local view on sheaf theory developed by Kashiwara and Schapira \cite{KS}.
 Here $M$ is a real analytic manifold, and $F,G\in D^b(M)$ are bounded complexes of sheaves.
 Let us call them \emph{splayed}, if there is near any given point $p\in M$ a local analytic isomorphism $M=V_1\times V_2$
 of analytic manifolds so that $F=\pi_1^*(F')$ and $G=\pi_2^*(G')$ for some $F'\in D^b(V_1)$ and 
 $G'\in D^b(V_2)$, with $\pi_i: V_1\times V_2\to V_i$ the projection ($i=1,2$). Here the pullback just means the usual sheaf theoretic
 pullback. 
 
 Let for example $F\in D^b(M)$ be a sheaf complex, all of whose cohomology sheaves are locally constant.
Then $F$ and $G$ are splayed for all $G\in D^b(M)$.
 
 Back to the general case, assume only that $F$ and $G$ are splayed. 
 Then the diagonal embedding $d: M\to M\times M$ is \emph{non-characteristic} with respect to the \emph{micro-support}
 $$ SS(F\boxtimes^L G)\subset T^*(M\times M)$$ of $F\boxtimes^L G$. In fact our proof of Lemma \ref{splayed-non-ch}
 applies to the closed $\bR^+$-conic subset $co(F):=SS(F)\subset T^*M$, which satisfies the three properties (co1-3)
 by \cite{KS}[Prop.5.4.1, Prop.5.4.5].

Assume that we are in the complex analytic context, with $F,G\in D^b_c(X)$ constructible sheaf complexes
(with rational  coefficients),
which are perverse (up to a shift) for the \emph{middle perversity} (see \cite{KS}[Sec.10.3] or \cite{Sch-book}[Chapter 6]).
If $F$ and $G$ are splayed, then also 
$$F\otimes^L G = d^*( F\boxtimes^L G )$$
is  a perverse sheaf up to a shift
(compare also with \cite{KS}[Lem.10.3.9]), since 
\begin{equation}
 F\otimes^L G =
d^*( F\boxtimes^L G ) \simeq d^!( F\boxtimes^L G )[2\cdot dim_C(M)]
\end{equation}
due to the non-characteristic property (see \cite{KS}[Cor.5.4.11]).
Let for example $X,Y$ be two \emph{splayed} pure dimensional complex analytic subsets.
Then also the corresponding middle perversity \emph{intersection cohomology complexes} $IC_X,IC_Y$ are splay\-ed, 
since these  commute with smooth pullback
% (up to a shift, 
(see  \cite{Sch-book}[Sec.6.0.2, Lem.6.0.3].
Here we are using the convention that $IC_X=\bQ_X$ for $X$ smooth).
 Then also $X\cap Y$ is pure dimensional of dimension
$dim_C(X)+dim_C(Y)-dim_C(M)$, with
$$IC_{X\cap Y}\simeq  d^*(  IC_X \boxtimes^L IC_Y )\:.$$
 If $X,Y$ are also compact, then one can also consider the corresponding Goresky-MacPher\-son L-classes
\cite{GM}, and it becomes natural to ask for the following intersection formula:
\begin{equation}\label{int-L}
L_*(X) \cdot  L_*(Y)  \stackrel{?}{=} L(TM)\cap L_*(X\cap Y) \in H_*(M,\bQ)\:,
\end{equation}
with $L(TM)$ the Hirzebruch L-class.
The Lagrangian approach taken up in this paper doesn't apply to the theory of 
$L$-classes of singular spaces. But the method of proof developed in
\cite{Sch4} based on \emph{Verdier specialization} might work,
since the non-characteristic property of the diagonal embedding  $d$ with respect to
the micro-support
$SS(F\boxtimes^L G)$ implies by \cite{KS}[Cor.5.4.10(i)]
that the support 
$$supp(\mu_M(F\boxtimes^L G))$$
 of the \emph{microlocalization} $\mu_M(F\boxtimes^L G)$ 
of two splayed sheaf complexes is contained in
the zero-section of $T^*_M(M\times M)$.

For a result  similar to (\ref{int-L}) about the Goresky-MacPherson L-class of a transversal
intersection in the $pl$-context  compare with
\cite{Matsui2}.
\end{ex}

\begin{ex}
We are working with coherent left $\cD$-modules on the complex analytic manifold $M$
(see e.g. \cite{KS}[Chapter XI]).
 Let us call two such modules $\cF,\cG$ \emph{splayed}, if there is near any given point $p\in M$ a local analytic isomorphism $M=V_1\times V_2$
 of analytic manifolds so that $\cF=\pi_1^{-1}(\cF')$ and $\cG=\pi_2^{-1}(\cG')$ for some $\cD$-modules $\cF'$ on $V_1$  and 
 $\cG'$ on $V_2$, with $\pi_i: V_1\times V_2\to V_i$ the projection ($i=1,2$). Here the pullback means this time the 
 pullback of left $\cD$-modules. 
 Then the diagonal embedding $d: M\to M\times M$ is \emph{non-characteristic} with respect to the \emph{characteristic variety}
 $$ char(\cF\underline{\boxtimes}  \cG)\subset T^*(M\times M)$$ 
of the $\cD$-module cross product $\cF\underline{\boxtimes}  \cG$. In fact our proof of Lemma \ref{splayed-non-ch}
 applies to the closed complex analytic 
 $\bC^*$-conic subset 
 $$co(\cG):=char(\cG)\subset T^*M \:,$$ 
 which satisfies the  properties (co1-3)
 by \cite{KS}[(11.2.22), Prop.11.2.12]. Then also
\begin{equation*}
\cF \underline{\otimes} \cG :=
d^{-1}( \cF \underline{\boxtimes}  \cG ) 
\end{equation*}
is a \emph{coherent} $\cD$-module by the non-characteristic property (see  \cite{KS}[Prop.11.2.12]).
If we are working with \emph{holonomic} $\cD$-modules, then their characterictic varieties are 
also closed conic \emph{Lagrangian} subsets of $T^*M$. Moreover, one can also directly define
for a holonomic $\cD$-module $\cG$ its \emph{characteristic cycle} $CC(\cG)$ supported on $char(\cG)$.
And it was  Ginsburg \cite{Gi2}, who first used from this $\cD$-module view point the ``non-characteristic''
condition in his study of the behavior of the MacPherson Chern class transformation with respect to a suitable 
convolution product (and compare also with \cite{Gi1} for his $\cD$-module approach to the
 MacPherson Chern class transformation).

Finally one can similarly introduce the notion of splayedness for $F\in D^b(M)$ a bounded sheaf complex
(with complex coefficients) and $\cG$ a coherent left $\cD$-module, using the usual (resp. $\cD$-module) pullback for
$F$ resp. $\cG$. Then the closed conic subsets $co(F)=SS(F) \subset T^*M$ and 
$co(\cG)=char(\cG)\subset T^*M$ both satisfy the properties (co1) and (co3).
Therefore one gets for $F$ and $\cG$ \emph{splayed} as in the proof 
 of Lemma \ref{splayed-non-ch} the estimate
\begin{equation}
SS(F)\cap char(\cG) \subset T^*_MM.
\end{equation}
So if $F$ is also a subanalytically constructible sheaf complex, one gets that  
$(F,\cG)$ is an \emph{elliptic pair} in the sense of \cite{ScSc}.
\end{ex}

%%%%%%%%%%%%%%%%%%%%%%%%%%%%%%%%%%%%%%%%%%%%%%%%%%%%%%%%%
%%%%%%%%%%%%%%%%%%%%%%%%%%%%%%%%%%%%%%%%%%%%%%%%%%%%%%%%%

\section{Pullback of Lagrangian cycles}
In this section we work again in the embedded complex analytic or algebraic
context, with $Y$ a closed subspace in the complex manifold $N$.
Let us first recall the main ingredients in MacPherson's definition of his
(dual) Chern classes of a constructible function and the well known by now
relation to the \emph{theory of characteristic cycles}
(cf. \cite{Gi1, Ken1, Sab, Sch-lect}).
 Then the main characters
of this story can be best visualized in the commutative diagram

\begin{equation} \label{eq:conormal}
\begin{CD}
F(Y) @< \check{E}u < \sim < Z_*(Y) @> \check{c}^{Ma}_{*} >> H_{*}(Y) \\
@|  @V cn V \wr V @| \\
F(Y) @ > CC > \sim > L(Y,M) @> c(T^{*}M|Y)\cap s_{*} >> H_{*}(Y) .
\end{CD}
\end{equation}
\noindent
Here $F(Y)$ and $Z_*(Y)$ are the groups of constructible functions and cycles
in the corresponding complex analytic or algebraic context
(where we allow locally finite sums in the analytic context). Similarly
 $H_{*}(Y)$ denotes  the Borel-Moore homology group in even degrees $H^{BM}_{2*}(Y,\bZ)$
or in the algebraic context the Chow group $CH_{*}(Y)$.\\

The transformation $\check{E}u$ associates to a closed irreducible subset $Z$ of $Y$
the constructible function given by the \emph{dual Euler obstruction}
$$\check{E}u_{Z}:=(-1)^{dim(Z)}\cdot Eu_{Z}$$
of $Z$,
and is linearly extended to cycles. Then $\check{E}u$ is an isomorphism of groups,
since $Eu_{Z}|Z_{reg}$ is constant of value $1$. The transformation
$\check{c}^{Ma}_{*}$ is similarly defined by associating to an irreducible $Z$
the total dual Chern-Mather class $\check{c}^{Ma}_{*}(Z)$ of $Z$ viewed
in $H_{*}(Y)$. 
One has the following
description of the dual Chern-Mather class of $Z$ 
in terms of the Segre class of the 
\emph{conormal space} $T_{Z}^{*}N:=\overline{T_{Z_{reg}}^{*}N}$ of $Z$ in $N$, which is 
 a conic Lagrangian subspace in $T^{*}N|X$:
\begin{equation}\label{cor:dualMather}
\check{c}^{Ma}_{*}(Z)=c(T^{*}N|Z)\cap s_{*}(T_{Z}^{*}N) 
\end{equation}
 after e.g. \cite[Lemme (1.2.1)]{Sab} or \cite[Lemma 1]{Ken1}.
 The Segre class is defined by (compare  \cite[Sec.4.1]{Fu}):
\begin{equation}\label{eq:segre} \begin{split}
s_{*}(T_{Z}^{*}N ) &:=\hat{\pi}_{*}(c(\cO(-1))^{-1}\cap [\bP(T_{Z}^{*}N \oplus \mbox{\boldmath $1$}  )]) \\
&= \sum_{i\geq 0}\: \pi_{*}(c^{1}(\cO(1))^{i}\cap [\bP(T_{Z}^{*}N \oplus \mbox{\boldmath $1$} )]).
\end{split}
\end{equation}

Here $\cO(-1)$ denotes the tautological line subbundle on the projective completion
$\hat{\pi}:  \bP(T^{*}N|_Z  \oplus \mbox{\boldmath $1$})\to Z$
with $\cO(1)$ as its dual. Note that we work with the projective completion to loose no information contained in the zero section.

\begin{ex} \label{ex:smooth}
Let $Z$ be a closed complex submanifold of $N$,
and consider the Lagrangian cycle $[T^{*}_{Z}N]$. Then $s_{*}([T^{*}_{Z}N])=
(c(T^{*}_{Z}N))^{-1}\cap [Z]$ and
\[c(T^{*}N|Z)\cap s_{*}([T^{*}_{Z}N])
= c(T^{*}N|Z)\cap (c(T^{*}_{Z}N))^{-1}\cap [Z]
= c(T^{*}Z)\cap [Z] .\]
Here we use the Whitney formula for the total Chern class $c$ and the exact sequence of vector 
bundles:
\[0\to T^{*}_{Z}N \to T^{*}N|Z \to T^{*}Z \to 0 .\]
\end{ex}
 
  By definition,
$L(Y,N)$ is the group of all cycles generated by the conormal spaces $T_{Z}^{*}N$
for $Z\subset Y$ closed and irreducible.
The vertical map $cn$ in diagram (\ref{eq:conormal}) is  the correspondence $Z\mapsto T_{Z}^{*}N$. 
Then (\ref{cor:dualMather}) obviously implies the commutativity of the right square in (\ref{eq:conormal}).

The \emph{dual MacPherson Chern class transformation} is defined by
\begin{equation}\label{dual-c}
\check{c}_{*}:=\check{c}^{Ma}_{*}\circ \check{E}u^{-1}:\: 
F(Y)\to H_{*}(Y).
\end{equation}
This agrees up to a sign with MacPherson's original definition \cite{MP}
of his Chern class transformation $c_{*}$, namely
\begin{equation}
\check{c}_{i}(\alpha) = (-1)^{i}\cdot c_{i}(\alpha) \in H_{i}(Y) \: .
\end{equation}

The commutativity of the left square in (\ref{eq:conormal})
follows either by definition, if the characteristic cycle map $CC$ is defined 
(as done in \cite{Ken1, Sab}) by
\begin{equation}\label{CC-Eu:2}
 CC(\check{E}u_Z) =  \left[T^*_ZN\right]
\end{equation}
for $Z\subset Y$ a closed irreducible subspace. Working in the complex algebraic or analytic context in the classical topology,
one can also take another more refined definition based on \emph{stratified Morse theory for constructible functions}
(as done in \cite{Sch-lect,ST}  and \cite{Sch-book}[Sec.5.0.3]):
\begin{equation}\label{Morse}
 CC(\alpha):= \sum_S \: (-1)^{dim(S)}\cdot \chi((NMD(S),\alpha))\cdot \left[T^*_{\bar{S}}N\right]
\end{equation}
for $\alpha\in F(Y)$ constructible with respect to a given Whitney b-regular stratification of $Y$ with connected strata $S$.
Here $\chi((NMD(S),\alpha))$ is the Euler characteristic of a suitable \emph{normal Morse datum} $NMD(S)$ of $Y$ weighted by
$\alpha$, which only depends on the stratum $S$ (but the details are not needed in this paper).
Then the equality (\ref{CC-Eu:2}) 
follows from  \cite{Sch-book}[(6.35), p.293 and p.323-324].\\

Let us consider a morphism of manifolds $f: M\to N$ as in the diagram 
(\ref{eq:diag0}), with $X:=f^{-1}(Y)$ and the same notations as in (\ref{eq:diag0}),
e.g. with  the induced map $f': f^{*}(T^{*}N|Y)\to T^{*}N|Y$.
Then we get a similar diagram for the \emph{projective completions}:

\begin{equation}\label{compl} \begin{CD}
\bb{P}(T^{*}M|X\oplus\mbox{\boldmath $1$}) @<\hat{t} << 
U\subset \bb{P}(f^{*}(T^{*}N|Y)\oplus\mbox{\boldmath $1$}) @> \hat{f}>> 
\bb{P}(T^{*}N|Y\oplus\mbox{\boldmath $1$}) \\
@VV \hat{\pi}_{X} V  @VV \hat{\pi} V  @VV \hat{\pi}_{Y} V \\
X @= X @> f>> Y .
\end{CD} \end{equation}
The right square is cartesian, but the map $\hat{t}$ is
only defined on the complement $U$ of $\bb{P}(Ker(t)\oplus\{0\})$.
For the application to Segre-classes it is important to note,
that 
\begin{equation}\label{iso}
\hat{t}^{*}({\mathcal O}_{X}(1))\simeq 
(\hat{f}^{*}({\mathcal O}_{Y}(1)))|U\:.
 \end{equation}
One has the following characterization (compare with \cite{Sch-book}[Lem.4.3.1] for a counterpart in the real algebraic
resp. analytic context):
\begin{lem}\label{nc-proper}
 Let $C\subset T^*N|Y$ be a closed conic complex algebraic resp. analytic subset. Then the following conditions are equivalent:
 \begin{enumerate}
  \item $f: M\to N$ is \emph{non-characteristic} for $C$, i.e. 
   $f'^{-1}(C) \cap Ker(t)$ is contained in the 
zero section $f^{*}(T^{*}_NN|Y)$  of the vector bundle $f^{*}(T^{*}N|Y)$.
\item The map $t: f'^{-1}(C) \to T^*M$ is \emph{proper} and therefore finite
(since its fibers are subspaces of an affine  vector space).
 \end{enumerate}
\end{lem}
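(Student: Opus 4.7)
The plan is to establish the equivalence by arguing $(1)\Rightarrow(2)$ via the projective completion picture and $(2)\Rightarrow(1)$ by exploiting the conic structure of $C$. Throughout, write $C':=f'^{-1}(C)\subset f^{*}(T^{*}N|Y)$, which is again closed and conic (being the pullback of such along the vector bundle map $f'$).

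For $(1)\Rightarrow(2)$, I first translate the non-characteristic condition into a statement about the projective completion as in \eqref{compl}: since $C'$ is conic, its closure $\hat{C}'$ in $\bb{P}(f^{*}(T^{*}N|Y)\oplus\mbox{\boldmath $1$})$ meets the hyperplane at infinity precisely along the projectivization $\bb{P}(C')$, so the non-characteristic hypothesis $C'\cap Ker(t)\subset f^{*}(T^{*}_{N}N|Y)$ is equivalent to $\hat{C}'\cap\bb{P}(Ker(t)\oplus\{0\})=\emptyset$. Hence $\hat{C}'\subset U$, so $\hat{t}$ is defined on all of $\hat{C}'$. Then, given any compact $K\subset T^{*}M$ with compact image $L:=\pi_{M}(K)\subset M$, restrict everything to the compact base $L\cap X$: the bundle $\bb{P}(f^{*}(T^{*}N|Y)\oplus\mbox{\boldmath $1$})|_{L\cap X}$ is compact (projective over compact), so the closed subset $\hat{C}'|_{L\cap X}$ is compact. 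Now $K$ is closed in $\bb{P}(T^{*}M|X\oplus\mbox{\boldmath $1$})$ and lies entirely in the affine part $T^{*}M|X$, so $\hat{t}^{-1}(K)=t^{-1}(K)$ as subsets. Therefore $t^{-1}(K)\cap C'$ is closed in the compact space $\hat{C}'|_{L\cap X}$, hence compact.

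For $(2)\Rightarrow(1)$, I argue by contradiction: suppose $v\in C'\cap Ker(t)$ lies outside the zero section. Since $C'$ is conic and $Ker(t)$ is a vector subbundle, the full complex line $\bC\cdot v$ sits inside $C'\cap Ker(t)$, and $t(\bC\cdot v)=\{0_{\pi(v)}\}$ is a single point of $T^{*}M$. Properness applied to the compact set $\{0_{\pi(v)}\}$ forces $\bC\cdot v\subset t^{-1}(\{0_{\pi(v)}\})\cap C'$ to be compact, contradicting $v\neq 0$. The finiteness addendum then follows because each fiber of $t$ is a translate of $Ker(t)$, hence sits inside an affine space; its intersection with $C'$ is a compact complex analytic (or algebraic) subset of affine space, which by the maximum principle (or Remmert's theorem) must be zero-dimensional, hence finite.

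The main subtlety lies in the first direction, where one must set up the projective completion so that properness is recovered over $T^{*}M$ itself and not merely over $X$. The non-characteristic hypothesis is used precisely to ensure that $\hat{t}$ extends continuously to the closure $\hat{C}'$; once this is in hand, the ordinary compactness of projective bundles over compact bases does the rest.
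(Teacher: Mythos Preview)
Your argument is correct and follows essentially the same route as the paper: both directions match, with $(2)\Rightarrow(1)$ argued via the conic structure (the paper simply calls this ``obvious'') and $(1)\Rightarrow(2)$ handled by passing to the projective completion and using that the non-characteristic hypothesis forces $\hat{C}'\subset U$, so that $\hat{t}$ is defined on all of $\hat{C}'$. The only stylistic difference is that where you verify properness by a direct compactness check over a compact base $L\cap X$, the paper instead observes that $\hat{\pi}_X\circ\hat{t}=\hat{\pi}$ with $\hat{\pi}$ and $\hat{\pi}_X$ both proper, concludes that $\hat{t}|\hat{f}^{-1}(\hat{C})$ is proper, and then recovers properness of $t|f'^{-1}(C)$ by base change along the cartesian upper-left square; this is slightly more streamlined but amounts to the same thing.
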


\begin{proof}
Note that $(2) \Rightarrow (1)$ is obvious, since $C$ is conic.
So lets discuss the other implication.
 Looking at the projective completions, one gets a commutative diagram
\begin{equation} \begin{CD}
T^{*}M|X @< t <<  f'^{-1}(C) @> f' >>  C \\
@VVV  @VVV @VVV \\
\bb{P}(T^{*}M|X\oplus\mbox{\boldmath $1$}) @<\hat{t} << 
\hat{f}^{-1}(\hat{C})  @> \hat{f}>> 
\hat{C} \\
@VV \hat{\pi}_{X} V  @VV \hat{\pi} V  @VV \hat{\pi}_{Y} V \\
X @= X @> f>> Y .
\end{CD}  \end{equation}
The upper vertical maps are the natural inclusions, and all squares except the
lower left square are cartesian.
Note that $\hat{t}$ is defined on $\hat{f}^{-1}(\hat{C})$,
since $\hat{f}^{-1}(\hat{C})\subset U$ by the assumption (1).
Then $\hat{t}|\hat{f}^{-1}(\hat{C})$ is proper, since $\hat{\pi}_{X}
\circ \hat{t} = \hat{\pi}$ and $\hat{\pi},\hat{\pi}_{X}$ are proper.
But then $t|f'^{-1}(C)$ is proper by base change.
\end{proof}

So if $f: M\to N$ is non-characterstic with respect to the closed conic complex algebraic resp. analytic subset $C\subset T^*N|Y$,
we can define for the closed conic complex algebraic resp. analytic subset $C':=t(f'^{-1}(C))\subset T^*M|X$ the induced group homomorphism
\begin{equation}\label{pullback-cone}
t_*\circ f'^{!}: H_*(C)\to H_*(C')\:.
\end{equation}
Here we use the map $f':  T^*N\to f^*T^*N$ of ambient complex (algebraic) manifolds
for the refined Gysin map $f'^!: H_*(C)\to H_*(f^{-1}(C))$.
Note that $t_*$ is degree preserving, whereas 
$$f'^!: H_i(C)\to H_{i+m-n}(f^{-1}(C))\:,$$
with $n=dim(N), m=dim(M)$ and $m-n=dim(f^*T^*N)-dim(T^*N)$.
Assume that $C$ resp. $C'$ are pure $d$- resp. $d'$-dimensional, with $d':=d+m-n$
(as will be the case for $C$ Lagrangian with $d=n$ and $d'=m$).
Then we get an induced pullback map of cycles:
$$t_*\circ f'^!: Z_d(C)\simeq H_d(C)\to H_{d'}(C')\simeq Z_{d'}(C') $$
behaving nicely with respect to \emph{Segre classes}:
\begin{equation}\label{pullback-s}
f^!(s_*([C]))= s_*(t_*f'^![C]) \in H_*(X)\:.
\end{equation}

In fact, one has the following sequence of equalities
(with the maps of projective completions as in (\ref{compl})):

\begin{equation*}\begin{split}
f^!s_*([C]) &= \sum_{i\geq 0} f^{!}\bigl(\hat{\pi}_{Y*}
(c^{1}({\mathcal O}_{Y}(1))^{i} \cap [\hat{C}])\big) \\
&= 
\sum_{i\geq 0}  \hat{\pi}_*\bigl(\hat{f}^!
(c^{1}({\mathcal O}_{Y}(1))^{i} \cap [\hat{C}])\big) \\
&= 
 \sum_{i\geq 0} \hat{\pi}_*
\bigl(c^1(\hat{f}^*{\mathcal O}_{Y}(1))^i \cap (\hat{f}^![\hat{C}])\big) \\
&=
\sum_{i\geq 0} \hat{\pi}_{X*}\hat{t}_*
\bigl(c^{1}(\hat{t}^*{\mathcal O}_{X}(1))^i \cap (\hat{f}^![\hat{C}])\big) \\
&= \sum_{i\geq 0} \hat{\pi}_{X*}\bigl(c^1({\mathcal O}_{X}(1))^i \cap
(\hat{t}_*\hat{f}^![\hat{C}])\big) =
s_{*}(t_*f^![C]) \:.
\end{split} \end{equation*}

Here we are using:
\begin{enumerate}
\item the base change isomorphism $f^!\hat{\pi}_{Y*} = \hat{\pi}_{*}\hat{f}^!$,
\item the compability  $\hat{f}^!(c^1(\cdot)\cap (\cdot)) = c^1(\hat{f}^{*}(\cdot))
\cap \hat{f}^!(\cdot)$,
\item the isomorphism (\ref{iso}) together with  $\hat{\pi}_*=\hat{\pi}_{X*}\hat{t}_*$ by the
functoriality of pushdown,
\item the projection formula $\hat{t}_{*}(c^1(\hat{t}^{*}\cdot)\cap (\cdot)) =
c^1(\cdot)\cap \hat{t}_{*}(\cdot)$.
\end{enumerate}

The non-characteristic pullback map (\ref{pullback-cone}) is also functorial in $f$.
Let $g: V\to M$ be another morphism of complex (algebraic) manifolds,
with $Z:=g^{-1}(X)=(f\circ g)^{-1}(Y)$. Consider the cartesian diagram

\begin{equation}\begin{CD}
g^*f^*T^*N @> g'' >> f^*T^*N @> f' >> T^*N \\
@V t'_f VV @VV t_f V \\
g^*T^*M @>> g' > T^*M \\
@V t_g VV \\
T^*V \:,
\end{CD}\end{equation}

with
$$f'\circ g''= (f\circ g)': g^*f^*T^*N = (f\circ g)^*T^*N \to T^*N$$
and 
$$t_g\circ t'_f = t_{f\circ g}: g^*f^*T^*N = (f\circ g)^*T^*N \to T^*V\:.$$

Using Lemma \ref{nc-proper}, one easily gets that $f\circ g$ is non-characteristic with respect to the
 closed conic complex algebraic resp. analytic subset $C\subset T^*N|Y$,
if and only if $f$ is non-characteristic with respect to $C$ and $g$ is non-characteristic with respect to
$C':=t(f'^{-1}(C))\subset T^*M|X$. Moreover, in this case one gets

\begin{equation}\label{funct-pb}
(t_{g*}g'^!) \circ (t_{f*}f'^!)= t_{g*}t'_{f*}g''^!f'^!= t_{f\circ g*}(f\circ g)^!: H_*(C)\to H_*(C'')\:,
\end{equation}
with $C'':= t_g(g'^{-1}(C'))=t_{f\circ g}(f\circ g)'^{-1}(C) \subset T^*V|Z$. Here we are using the 
functorialities $t_{g*}t'_{f*}=t_{f\circ g*}$ and $g''^!f'^!=(f\circ g)'^!$ together with the base change isomorphism
$g'^!t_{f*}=t'_{f*}g''^!$.\\

Now we can come to the main result of this chapter.
\begin{thm}\label{non-cc}
Let  $f: M\to N$ be a morphism of complex (algebraic) manifolds of (complex) dimensions $m=dim(M), n=dim(N)$,
and $Y\subset N$ be a closed subspace, with $X:=f^{-1}(Y)\subset M$.
Assume that $f$ is non-characteristic with respect to the support
$C:=supp(CC(\gamma))\subset T^*N|Y$ of the characteristic cycle $CC(\gamma)$ of a constructible
function $\gamma\in F(Y)$. Then $C':=t_{f}(f'^{-1}(C))$ is pure $m$-dimensional,
with
\begin{equation}
t_{f*}f'^!(CC(\gamma)) = (-1)^{m-n}\cdot CC(f^*(\gamma))\:.
\end{equation}
In particular,  the left hand side is a Lagrangian cycle in $T^*M|X$.
\end{thm}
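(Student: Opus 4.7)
The plan is to reduce the statement to its elementary Lagrangian building blocks and then match coefficients component by component, first in the smooth transversal situation and then in general by a microlocal Morse-theoretic computation.

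By linearity of both sides in $\gamma$, together with the fact that the non-characteristic hypothesis on the closed conic support $C=supp(CC(\gamma))$ forces the same property for each irreducible component of $C$, it suffices to treat the case $\gamma=\check{E}u_{Z}$ with $Z\subset Y$ closed irreducible, so that by (\ref{CC-Eu:2}) one has $CC(\gamma)=[T^{*}_{Z}N]$. Since $T^{*}_{Z}N$ is pure $n$-dimensional and $f: M\to N$ is lci of virtual relative dimension $m-n$, the refined cycle $f'^{!}[T^{*}_{Z}N]$ is pure $m$-dimensional on $f'^{-1}(T^{*}_{Z}N)$; by Lemma \ref{nc-proper} the map $t_{f}$ is finite there, so $C'=t_{f}(f'^{-1}(T^{*}_{Z}N))$ is closed conic and pure $m$-dimensional in $T^{*}M|X$. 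A standard conicity-plus-dimension argument then shows that the $m$-cycle $t_{f*}f'^{!}[T^{*}_{Z}N]$ is isotropic, hence Lagrangian, so via the characteristic cycle isomorphism it equals $CC(\delta)$ for a unique $\delta\in F(X)$. The task becomes to identify $\delta=(-1)^{m-n}f^{*}(\check{E}u_{Z})$.

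Verify this first in the smooth transversal case. If $Z$ is smooth of dimension $d$ and $f$ is transversal to $Z$, then $W:=f^{-1}(Z)$ is smooth of dimension $d+m-n$, and a direct computation of the refined Gysin map gives $t_{f*}f'^{!}[T^{*}_{Z}N]=[T^{*}_{W}M]$. On the other hand, $f^{*}(\check{E}u_{Z})=(-1)^{d}\cdot 1_{W}$, and using $1_{W}=(-1)^{d+m-n}\check{E}u_{W}$ together with (\ref{CC-Eu:2}) yields $CC(f^{*}(\check{E}u_{Z}))=(-1)^{d}\cdot(-1)^{d+m-n}[T^{*}_{W}M]=(-1)^{m-n}[T^{*}_{W}M]$, so that $(-1)^{m-n}CC(f^{*}(\check{E}u_{Z}))=[T^{*}_{W}M]$ matches the left hand side.

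For general (possibly singular) $Z$ the plan is to match the multiplicity on each irreducible component $T^{*}_{W_{\beta}}M$ of $C'$. Fix a Whitney b-regular stratification of $Y$ containing $Z_{reg}$ as a stratum, and a compatible stratification of $M$. By formula (\ref{Morse}) the right-hand multiplicity is a signed Euler characteristic of a normal Morse datum of $f^{*}(\check{E}u_{Z})$ along $W_{\beta}$; by the definition of the refined Gysin map and the projection formula for the finite map $t_{f}$, the left-hand multiplicity is a sum, over the finitely many reduced preimages in $f'^{-1}(T^{*}_{Z}N)$ of a generic covector $(x,\xi)\in T^{*}_{W_{\beta}}M$, of local intersection indices. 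The main obstacle is to match these two computations. The key input is the microlocal Morse lemma of Kashiwara--Schapira: under the non-characteristic hypothesis, a generic $\xi$ lifts via the correspondence $T^{*}M\leftarrow f^{*}T^{*}N\to T^{*}N$ to finitely many reduced covectors $\eta_{i}\in T^{*}_{Z}N$, and at each lift the normal Morse datum of $f^{*}(\check{E}u_{Z})$ at $(x,\xi)$ is identified with the sheaf-theoretic (shifted by $m-n$) transverse pullback of the normal Morse datum of $\check{E}u_{Z}$ at $(f(x),\eta_{i})$. The shift $(m-n)$ contributes precisely the global sign $(-1)^{m-n}$, and combining this pointwise identification with the projection formula for $t_{f*}f'^{!}$ yields $\delta=(-1)^{m-n}f^{*}(\check{E}u_{Z})$, completing the proof.
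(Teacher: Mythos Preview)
Your approach differs substantially from the paper's. The paper factors $f$ through its graph as a closed embedding followed by a submersion, and uses the functoriality (\ref{funct-pb}) of the non-characteristic pullback to treat the two cases separately. The submersion case reduces (\'etale-)locally to a projection $M\times N\to N$, where the claim follows from the multiplicativity of the Euler obstruction (\ref{mult-eu}). The closed embedding is further reduced by induction to the inclusion of a smooth hypersurface $M=\{g=0\}\subset N$; there one passes to the relative conormal space $T^*_Zg$, invokes Sabbah's theorem $k^!([T^*_Zg])=CC(-\psi_g(\check{E}u_Z))$ for the nearby-cycle functor $\psi_g$, and finishes by observing that the vanishing cycles $\phi_g(\check{E}u_Z)$ are zero under the non-characteristic hypothesis (via \cite{BLS} or the microlocal intersection formula of \cite{Sch3}). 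In particular, the Lagrangian property of the left-hand side is a \emph{consequence} of the identification, not an ingredient.

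Your route---linearize to $\gamma=\check{E}u_Z$, establish the Lagrangian property directly, and then match multiplicities component by component via stratified Morse theory---is a reasonable strategy in principle, and your smooth transversal base case is correct. But the last paragraph is not a proof as written. First, the claim that $t_{f*}f'^![T^*_ZN]$ is isotropic does not follow from ``conicity plus dimension'' alone (a generic conic $m$-cycle in $T^*M$ need not be isotropic); what is needed is the symplectic fact that $t_f(f'^{-1}(\Lambda))$ is isotropic for $\Lambda$ Lagrangian, which you should cite. Second, and more seriously, the asserted identification of normal Morse data under non-characteristic pullback is essentially the entire content of the theorem, and you have supplied no argument for it. The phrase ``microlocal Morse lemma of Kashiwara--Schapira'' is not a precise reference; the closest result in \cite{KS} is Prop.~9.4.3 (see the Remark after the theorem), which is formulated for sheaf complexes and micro-supports rather than for constructible functions and their normal Morse data, and transferring it to the present setting requires precisely the sort of local computation (comparison of local Milnor fibers, or a nearby-cycle argument) that the paper carries out via Sabbah's theorem. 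Without a concrete computation or an exact citation bridging this gap, the general case remains unproven.
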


So by (\ref{cor:dualMather}), (\ref{dual-c}) and (\ref{pullback-s})  we get 
\begin{equation}\begin{split}
f^!(c(T^*N)^{-1}\cap \check{c}_*(\gamma))  &= f^!s_*(CC(\gamma)) \\
&= (-1)^{m-n}\cdot s_*(CC(f^*(\gamma))) \\
 &= (-1)^{m-n}\cdot c(T^*M)^{-1}\cap \check{c}_*(f^*(\gamma)) \:.
\end{split}\end{equation}

And this is just a reformulation of our main  Theorem \ref{VRR} in terms of the
dual Chern MacPherson class, since $c^i(V^*)=(-1)^i\cdot c^i(V)$ for a complex vector bundle $V$.
Recall that also $\check{c}_i(\gamma)=(-1)^i\cdot c_i(\gamma)$. Finally the sign $(-1)^{m-n}$ is disapearing in
 Theorem \ref{VRR}, since $t_{f*}f'^!: H_i(C)\to H_{i+m-n}(C')$.

\begin{rem}
Theorem \ref{non-cc} is similar to the result of \cite{KS}[Prop.9.4.3] for constructible sheaf complexes in the context of real geometry,
but formulated under the stronger assumption of the non-characteristic property with respect to the \emph{micro-support}
instead of the  support of the characteristic cycles. Moreover, if one then applies \cite{KS}[Prop.9.4.3] to our complex analytic
(or algebraic) context, then also the sign $(-1)^{m-n}$ is not appearing, because a different orientation convention is used
for the ambient manifolds $T^*N,T^*M$ (needed for the definition of the Gysin map $f'^!$.
 Compare \cite{Sch-book}[Rem.5.0.3]).
\end{rem}

Let us now discuss the proof of Theorem \ref{non-cc}. By using the graph embedding and the functoriality
(\ref{funct-pb})  of the non-characteristic pullback, one can consider separately the case of a submersion $f: M\to N$ of complex
(algebraic) manifolds, and that of a closed embedding $i: M\to N$. Moreover it can be (\'{e}tale)  locally checked on $X\subset M$.
So for the case of a submersion we can assume $f: M\times N\to N$ is a projection of complex manifolds. Then the claim follows from
$$t_{f*}f'^{-1}([T^*_ZN])= [T^*_MM] \times [T^*_ZN]$$
and the mutiplicativity of the Euler obstruction
$$Eu_{M\times Z}= Eu_M\times Eu_Z = 1_M \times Eu_Z = f^*(Eu_Z)$$
for $Z\subset Y$ a closed irreducible subspace. Then 
$$CC(Eu_Z)=(-1)^{dim(Z)}\cdot [T^*_ZN] $$
and
$$CC(Eu_{M\times Z})=(-1)^{dim(Z)+dim(M)}\cdot  [T^*_MM] \times [T^*_ZN]\:.$$
This also explains the sign $(-1)^{dim(M)}$ appearing, with $dim(M)$ the fiber dimension of the projection $f: M\times N\to N$.
Here the multiplicativity of the Euler obstruction is also equivalent to the multiplicativity of the characteristic cycle map
$CC$. If one uses the refined Morse theoretical definition (\ref{Morse}), then this multiplicativity follows from 
\cite{Sch-book}[(5.6) and (5.24)], and the sign $(-1)^{dim(M)}$ comes from the use of $(-1)^{dim(S)}$ in the definition
(\ref{Morse}).\\

The local study  of a closed embedding $i: M\to N$ can be reduced by induction (and the functoriality
(\ref{funct-pb})  of the non-characteristic pullback) to the case $i: M=\{f=0\}\hookrightarrow N$ of the inclusion of a smooth 
hypersurface, with $f: N\to \bC$ a submersion. Assume $i$ is non-characteristic for $T^*_ZN$, with $Z\subset Y \subset  N$ a closed
irreducible subspace. This just means $df(M)\cap T^*_ZN=\emptyset$. By shrinking $N$, we can also assume
$df(N)\cap T^*_ZN=\emptyset$.
Then $f: Z_{reg}\to \bC$ is also a submersion,
in particular $Z\not\subset M=\{f=0\}$. Consider the exact sequence of vector bundles on $N$:
$$0\to <df>= Ker(p)\to T^*N \to T_f^*\to 0\:,$$
with the projection $p: T^*N\to T^*_f$ dual to the inclusion $T_f\to TN$ of the subvector bundle of tangents to the fibers of $f$.
Then $T^*_ZN\cap Ker(p)$ is contained in the zero-section $T_N^*N$ of $T^*N$.
Therefore $p: T^*_Z\to T^*_f$ is proper and finite (as in the proof of Lemma \ref{nc-proper}),
with 
$$p(T^*_ZN)=T^*_Zf:=\overline{T^*_{Z_{reg}}f}$$
 the \emph{relative conormal space} of $f|Z$
(i.e. the closure of the fiberwise conormal bundle of $f: Z_{reg}\subset N\to \bC$). 
In particular,  $T^*_Zf$ is irreducible of dimension $n=dim(N)=dim(T^*_ZN)$.
Moreover
$p|T^*_{Z_{reg}}N$ is also injective so that
 $$p_*([T^*_ZN])=[T^*_Zf]\:.$$
Consider the cartesian diagram
\begin{equation}\begin{CD}
i^*T^*N @> i' >> T^*N \\
@V t VV @VV p V \\
T^*M @>> k > T^*_f\:,
\end{CD}\end{equation}
witk $k: T^*M=T^*_f|M\to T^*_f$ the inclusion of the fiber over $\{f=0\}=M$.
Then
$$t(i'^{-1}(T^*_ZN))=k^{-1}(p(T^*_ZN)) \subset T^*M$$
is of pure dimension $n-1=dim(M)$. Moreover, by base change  we get
\begin{equation}\begin{split}
t_*(i'^!(CC(\check{E}u_Z)]) &= t_*(i'^!([T^*_ZN])) \\
&= k^!(p_*([T^*_ZN])) \\
&= k^!([T^*_Zf])\:.
\end{split}\end{equation}

And by \cite{Sab}[Thm.4.3] one has
\begin{equation}\label{sab}
 k^!([T^*_Zf]) =CC(-\psi_f(\check{E}u_Z))\:,
\end{equation}
with $\psi_f: F(Y)\to F(X)$ the \emph{nearby cycles} for constructible functions, which 
are calculated as weighted Euler characteristics of local Milnor fibers
(compare \cite{Sch-lect}[2.4.7]). Then the sign in (\ref{sab}) is again due to the 
use of $(-1)^{dim(S)}$ in the definition (\ref{Morse}), because this local Milnor fiber is transversal
to the strata $S$ of an adapted Whitney stratification, cutting down the complex dimension by one.

Finally we only have to show
$\psi_f(\check{E}u_Z)=i^*(\check{E}u_Z)$. But the difference is just the \emph{vanishing cycles}
(see \cite{Sch-lect}[2.4.8]):
$$\phi_f(\check{E}u_Z):= \psi_f(\check{E}u_Z) - i^*(\check{E}u_Z) \:,$$
with $\phi_f(\check{E}u_Z)(x)=0$ for all $x\in X$ by \cite{BLS}[Thm.3.1]. Or one can use here
\cite{Sch3}[(15)] as an application of the 
\emph{micro-local intersection formula} (see \cite{Sch3}[(13),(14)]):
\begin{equation}
-\phi_f(\check{E}u_Z)(x)= \sharp_{df_x}(df(N)\cap CC(\check{E}u_Z) )\:,
\end{equation}
 since $CC(\check{E}u_Z) = [T^*_ZN]$ and $df(N)\cap T^*_ZN=\emptyset$ by the non-characteristic assumption.\\

Let us finish this paper with another nice application of Theorem \ref{non-cc}.

\begin{cor}\label{cor-int}
Let $M$ a complex (algebraic) manifold of dimension $m=dim(M)$, with $\alpha, \beta\in F(M)$ given constructible functions.
Assume that the diagonal embedding $d: M\to M\times M$ is non-characteristic with respect
to $supp(CC(\alpha\times \beta))$ (e.g. $\alpha, \beta\in F(M)$ are splayed or stratified transversal), with  $supp(\alpha\cdot \beta)$ compact.

Then also  $supp(CC(\alpha)\cap CC(\beta))\subset T^*M$ is compact, with
\begin{equation}\label{mic-int}
\chi(M;\alpha\cdot \beta) = (-1)^m\cdot deg( CC(\alpha)\cap CC(\beta))\:.
\end{equation} 
\end{cor}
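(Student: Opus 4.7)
The plan is to combine Theorem \ref{non-cc} applied to the diagonal embedding $d:M\to M\times M$ with the standard micro-local index formula
\begin{equation*}
\chi(M;\gamma) \;=\; \deg\bigl(CC(\gamma)\cap [T^*_MM]\bigr),
\end{equation*}
valid for any constructible function $\gamma\in F(M)$ with compact support; here $[T^*_MM]$ is the zero section of $T^*M$ and $\cap$ denotes the refined intersection in $T^*M$ (cf.\ \cite{Sch3}[(13),(14)] or \cite{Sch-book}).

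Applying Theorem \ref{non-cc} to $f=d$ (so that $m-n=-m$ and $d^*(\alpha\times\beta)=\alpha\cdot\beta$), together with the multiplicativity $CC(\alpha\times\beta)=CC(\alpha)\times CC(\beta)$ coming from (\ref{mult-eu}), one first obtains
\begin{equation*}
t_{d*}\,d'^!\bigl(CC(\alpha)\times CC(\beta)\bigr) \;=\; (-1)^m\, CC(\alpha\cdot\beta).
\end{equation*}
Next, I would check that the non-characteristic hypothesis forces $supp(CC(\alpha))\cap supp(CC(\beta))\subset T^*_MM$: if $(x,\omega)$ lies in this intersection, then the $\bC^*$-conicality of $CC(\beta)$ gives $(x,-\omega)\in supp(CC(\beta))$, and the defining condition of non-characteristicity for $d$ (namely $supp(CC(\alpha)\times CC(\beta))\cap Ker(t_d)$ is contained in the zero section) forces $\omega=0$. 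Since this intersection projects into $supp(\alpha\cdot\beta)$, it is compact, so $CC(\alpha)\cap CC(\beta)$ is a proper $0$-cycle of compact support in $T^*M$ and $\chi(M;\alpha\cdot\beta)$ is well-defined.

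Applying the micro-local index formula to $\alpha\cdot\beta$ and substituting the first display yields
\begin{equation*}
\chi(M;\alpha\cdot\beta) \;=\; (-1)^m\,\deg\bigl(t_{d*}\,d'^!(CC(\alpha)\times CC(\beta))\cap [T^*_MM]\bigr).
\end{equation*}
The last step is to identify this degree with $\deg(CC(\alpha)\cap CC(\beta))$. Here I would invoke the projection formula together with the identification $t_d^{-1}[T^*_MM]=\{(x,\omega,-\omega)\}\subset T^*M\oplus T^*M$, canonically isomorphic to $T^*M$ via $(x,\omega,-\omega)\mapsto(x,\omega)$. Under this identification, $d'^!(CC(\alpha)\times CC(\beta))\cap t_d^{-1}[T^*_MM]$ corresponds to $CC(\alpha)\cap a^*CC(\beta)$, with $a:T^*M\to T^*M$ the antipodal map; since any complex conic Lagrangian cycle is $a$-invariant, $a^*CC(\beta)=CC(\beta)$, and (\ref{mic-int}) follows.

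The main obstacle I expect is precisely this last projection-formula identification: carefully matching $t_{d*}d'^!(C_1\times C_2)\cap [T^*_MM]$ computed in $T^*M$ with the direct refined intersection $C_1\cap C_2$ of two Lagrangian cycles in $T^*M$, using base change and the $\bC^*$-invariance. All other ingredients are by now standard.
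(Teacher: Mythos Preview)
Your proposal is correct and follows essentially the same route as the paper: both arguments combine Theorem \ref{non-cc} for the diagonal, the multiplicativity of $CC$, the global index formula expressing $\chi$ as the intersection of $CC(\alpha\cdot\beta)$ with the zero section, and then a base-change/projection-formula step identifying this with $\deg(CC(\alpha)\cap a_*CC(\beta))=\deg(CC(\alpha)\cap CC(\beta))$. The paper organizes the last step via the explicit cartesian diagram with $s$ the zero section and the base-change identity $s^!t_*=\pi_{M*}(id,a)^!$, which is exactly the ``projection-formula identification'' you flag as the only nontrivial point.
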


\begin{proof} Consider the cartesian diagram
\begin{equation} \begin{CD}
T^*M @> (id,a) >> T^*M\times_M T^*M @> d' >> T^*(M\times M) \\
@V \pi_M VV @VV t V \\
M @>> s > T^*M \:,
\end{CD}\end{equation}
with $s: M\to T^*M$ the zero-section and $a: T^*M\to T^*M$ the antipodal map.
Then one gets  by the \emph{global index formula} for characteristic cycles (see e.g.
\cite{Sch3}[Cor.0.1], which also covers the corresponding context in real geometry):
$$\chi(M;\alpha\cdot \beta) = k_*s^!(CC(\alpha\cdot \beta))\:,$$
with $k: M\to pt$ a constant map. Here in our complex geometric context, this is also a very special case of the functoriality 
of the (dual) Chern MacPherson class with respect to the constant proper map $k: supp(\alpha\cdot \beta)\to pt$,
since 
\begin{equation}
 s^!(CC(\alpha\cdot \beta))=\check{c}_0(\alpha\cdot \beta)=c_0(\alpha\cdot \beta) \in H_0(supp(\alpha\cdot \beta))\:.
\end{equation}
And this equality follows from \cite{Fu}[Example 4.1.8]:
$$\{ c(T^*M)\cap s_*(CC(\alpha\cdot \beta)) \}_0\:,$$
with $\{-\}_0$ the degree zero part, calculates for the $m$-dimensional conic cycle $ CC(\alpha\cdot \beta)$ in the vector bundle $T^*M$ 
of rank $m$ the \emph{intersection with the zero-section}. But by the commutative diagram (\ref{eq:conormal}) we know that
$$c(T^*M)\cap s_*(CC(\alpha\cdot \beta)) =\check{c}_*(\alpha\cdot \beta) \:.$$

By Theorem \ref{non-cc} and the multiplicativity of the characteristic cycle map $CC$  we also have
$$CC(\alpha\cdot \beta)= (-1)^m\cdot t_*d'^!(CC(\alpha)\times CC(\beta))\:.$$
And by the non-characteristic (or splayedness) assumption,
$$t: d'^{-1}(supp(CC(\alpha)\times CC(\beta)))\to T^*M$$
is proper. But then by base change, also $\pi_M$ as a map 
$$(id,a)^{-1}d'^{-1}(supp(CC(\alpha)\times CC(\beta)))=supp(CC(\alpha)\cap a_*CC(\beta))\to M$$
is proper, with image contained in the compact subset $supp(\alpha\cdot \beta)\subset M$.
Note that $a^!=a_*: H_*(T^*M)\to H_*( T^*M)$, since $a^2=id: T^*M\to T^*M$.
Finally by the base change $s^!t_*=\pi_{M*}(id,a)^!$ one gets
\begin{equation*}\begin{split}
\chi(M;\alpha\cdot \beta)  &= (-1)^m\cdot k_*\pi_{M*}(CC(\alpha)\cap a_*(CC(\beta)) \\
&=  (-1)^m\cdot deg( CC(\alpha)\cap a_*CC(\beta))\:.
\end{split}\end{equation*}
And in our complex context we also have $ a_*CC(\beta)=CC(\beta)$.
\end{proof}

\begin{rem}
In \cite{GrM} a counterpart of Corollary \ref{cor-int}  in the context of real geometry
is discussed under a ``stratified tranversality'' assumption. See also \cite{ScSc}[(1.4), Part II] for a far reaching generalization to
\emph{elliptic pairs}.  Note that both references use a different orientation convention, so that the sign
$(-1)^m$ in (\ref{mic-int}) disappears.
\end{rem}

% ------------------------------------------------------------------------

\subsection*{Acknowledgment}
This paper is an extended version of a talk given at a conference in Merida (Mexico 2014)
for the celebration of the 60th birthday of Pepe Seade. Here I would like to thank the organizers for the invitation to this wonderful conference.
It is a pleasure to thank Pepe Seade for so many discussions over the years on the theory of Chern classes of singular spaces.
The author is also  greatful to Paolo Aluffi und Eleonore Faber for their inspiring papers as well as 
for some communications regarding the subject of this paper. The author was supported by the SFB 878 groups, geometry and actions. 

% ------------------------------------------------------------------------

\end{document}